 \def\@seccntformat#1{\csname the#1\endcsname.\hspace{2ex}}
 \newcommand{\nsubsection}%
  {\@startsection{subsection}%
  {2}%
  {\z@}%
  {-3.5ex plus -1ex minus -.2ex}%
  {-0ex}%
  {\reset@font\normalsize\bfseries}}%
 \newcommand{\nnsubsubsection}%
  {\@startsection{subsubsection}%
  {3}%
  {\z@}%
  {-3.5ex plus -1ex minus -.2ex}%
  {-2ex}%
  {\reset@font\normalsize\bfseries}}%
  \renewcommand{\subsubsection}%
  {\@startsection{subsubsection}%
  {3}%
  {\z@}%
  {-3.5ex plus -1ex minus -.2ex}%
  {0ex}
  {\reset@font\normalsize\bfseries}}%
  \renewcommand{\subsection}%
  {\@startsection{subsection}%
  {2}%
  {\z@}%
  {-3.5ex plus -1ex minus -.2ex}%
  {0ex}
  {\reset@font\normalsize\bfseries}}%
 \newcommand{\nnsubsection}%
  {\@startsection{subsection}%
  {2}%
  {\z@}%
  {-3ex}%
  {1ex}%
  {\reset@font\normalsize\bfseries}}%
 \newcommand{\usubsection}%
  {\@startsection{subsection}%
  {2}%
  {\z@}%
  {-3.5ex plus -1ex minus -.2ex}%
  {0.5ex}
  {\reset@font\normalsize\bfseries}}%
 \newcommand{\nsection}{\@startsection{section}{1}{\z@}%
     {-5ex}
     {1ex}
     {\reset@font\center\large\sc}}
 \renewenvironment{thebibliography}[1]
 {\nsection*{\refname\@mkboth{\refname}{\refname}}%
   \list{\@biblabel{\@arabic\c@enumiv}}%
        {\settowidth
	\labelwidth{\@biblabel{#1}}%
         \leftmargin
	 \labelwidth
         \advance
	 \leftmargin
	 \labelsep
         \@openbib@code
         \usecounter{enumiv}%
         \let\p@enumiv\@empty
	 \parskip=0pt
	 \itemsep=1pt
	 \parsep=1pt
	 \itemindent=\z@
         \renewcommand\theenumiv{\@arabic\c@enumiv}}%
   \sloppy
   \clubpenalty4000
   \@clubpenalty\clubpenalty
   \widowpenalty4000%
   \footnotesize
   \sfcode`\.\@m}
  {\def\@noitemerr
    {\@latex@warning{Empty `thebibliography' environment}}%
   \endlist}
\newtheoremstyle{thm}
 {1em}
 {3pt}
 {\itshape}
 {}
 {\bf}
 {. ---}
 {0.5em}
 {}
\newtheoremstyle{dfn}
 {1em}
 {3pt}
 {}
 {}
 {\bf}
 {. {---}}
 {0.5em}
 {}
\theoremstyle{thm}
\newtheorem{thm}[subsection]{Theorem}
\newtheorem{lem}[subsection]{Lemma}
\newtheorem*{lem*}{Lemma}
\newtheorem{cor}[subsection]{Corollary}
\newtheorem*{cor*}{Corollary}
\newtheorem{prop}[subsection]{Proposition}
\newtheorem*{prop*}{Proposition}
\newtheorem*{thm*}{Theorem}
\theoremstyle{dfn}
\newtheorem*{dfn*}{Definition}
\newtheorem*{ex*}{Example}
\newtheorem{rem}[subsection]{Remark}
\newtheorem*{rem*}{Remark}
\newenvironment{meta}{
\noindent \color{red}
\sffamily[}{\upshape]}
\newcommand{\riso}{ \overset{\sim}{\longrightarrow}\, }
\newcommand{\coker}{\mathrm{coker}\,}
\newcommand{\FF}{{\mathcal{F}}}
\newcommand{\B}{{\mathcal{B}}}
\newcommand{\E}{{\mathcal{E}}}
\newcommand{\G}{{\mathcal{G}}}
\renewcommand{\H}{{\mathcal{H}}}
\newcommand{\D}{{\mathcal{D}}}
\newcommand{\I}{{\mathcal{I}}}
\newcommand{\J}{{\mathcal{J}}}
\newcommand{\PP}{{\mathcal{P}}}
\renewcommand{\O}{{\mathcal{O}}}
\newcommand{\V}{\mathcal{V}}
\renewcommand{\S}{\mathcal{S}}
\newcommand{\ZZ}{\mathcal{Z}}
\newcommand{\X}{\mathcal{X}}
\newcommand{\U}{\mathcal{U}}
\newcommand{\A}{\mathbb{A}}
\newcommand{\DD}{\mathbb{D}}
\newcommand{\R}{\mathbb{R}}
\newcommand{\Q}{\mathbb{Q}}
\newcommand{\Z}{\mathbb{Z}}
\newcommand{\hdag}{  \phantom{}{^{\dag} }    }
\newcommand{\mr}[1]{\mathrm{#1}}
\newcommand{\ms}[1]{\mathscr{#1}}
\newcommand{\mc}[1]{\mathcal{#1}}
\newcommand{\mb}[1]{\mathbb{#1}}
\newcommand{\DdagQ}[1]{\mc{D}^\dag_{{#1},\mathbb{Q}}}
\newcommand{\Gammadag}{\underline{\Gamma}^\dag}
\newcommand{\fsch}[1]{{\mc{#1}}}
\renewcommand{\H}{\ms{H}}
\newcommand{\indlim}{\mathop{\underrightarrow{\mathrm{lim}}}}
\newcommand{\shom}{\mc{H}om}
\begin{document}
\title{On Beilinson's equivalence for $p$-adic cohomology}
\author{Tomoyuki Abe and Daniel Caro}
\date{}
\maketitle

\begin{abstract}
 In this short paper, we construct a unipotent nearby cycle functor and
 show a $p$-adic analogue of Beilinson's equivalence comparing two
 derived categories: the derived category of holonomic modules and
 derived category of modules whose cohomologies are holonomic.
\end{abstract}

\section*{Introduction}
In the theory of $p$-adic cohomology, lack of nearby cycle functor has
been a big technical obstruction for proving important results.
For example, \cite{AM}, \cite{Caprop} are few of such examples.
In this short paper, we establish the theory of unipotent nearby cycle
functor, and as an application, we prove a $p$-adic analogue of
Beilinson's equivalence: for a smooth variety $X$ over $\mb{C}$, we have
an equivalence of categories (see \cite{Be})
\begin{equation*}
 D^{\mr{b}}(\mr{Hol}(X))\xrightarrow{\sim}
  D^{\mr{b}}_{\mr{hol}}(X).
\end{equation*}

For the construction of the unipotent nearby cycle functor, we follow
the idea of \cite{Beglue}.
The original construction of Beilinson's unipotent nearby
cycles in the context of algebraic $\D$-modules is based on a key lemma
whose proof is a consequence of the existence of $b$-functions.
However, in our $p$-adic context, the definition of $b$-functions is
problematic.
To remedy this, we can use successfully another powerful tool, namely,
Kedlaya's semistable reduction theorem, applied to overconvergent
isocrystals with Frobenius structure.

Now, even though the proof of Beilinson's equivalence is written in a
way that it can be adopted for many cohomology theories, we still need
to figure out what the suitable definition of ``holonomic modules'' are
in the $p$-adic context.
A naive answer might be to consider overholonomic complexes (without
Frobenius structure) introduced by the
second author. However, we do not know if this category is closed under
taking tensor products when modules do not admit Frobenius
structure. Thus, the category does not seem appropriate for the
equivalence because Beilinson's original proof uses the stability under
Grothendieck six operations. Moreover, the full subcategory of
overholonomic modules whose objects are endowed with some Frobenius
structure is not thick. To resolve these issues, in this paper, we
construct some kind of smallest triangulated subcategory of the category
of overholonomic complexes which contains modules with Frobenius
structure. Its construction allows us to come down by ``devissage'' to
the case of modules with Frobenius structure.

Finally, we point out that techniques developed in this paper are
crucial tools to construct the theory of arithmetic $\D$-modules for
general schemes in \cite{Apcc}, and we also expect more applications:
unification of the rigid cohomology theory into arithmetic $\D$-modules
(cf.\ \cite[1.3.11]{Apcc}), $p$-adic analogue of Fujiwara's trace
formula, {\it etc.}.

The first section is devoted to construct the good triangulated
category, and the unipotent nearby cycle functors is treated in the
second section.

\subsection*{Acknowledgment}\mbox{}\\
The first author (T.A.) was supported by Grant-in-Aid for Young
Scientists (B) 25800004.
The second author (D.C.) thanks Thomas Bitoun for his interest
concerning this $p$-adic analogue.
\bigskip

In this paper, we fix a complete discrete valuation ring $R$ of mixed characteristic. Its
residue field is denoted by $k$, and we assume it to be perfect and of characteristic $p$. 
We suppose that there exists a lifting $\sigma\colon R\xrightarrow{\sim}R$ of the $s$-th
Frobenius automorphism of $k$. We put $q:=p^s$, $K:=\mr{Frac}(R)$.
If there is no ambiguity with $K$,
we sometimes omit ``$/K$'' in the notation of some categories.

\section{Overholonomic $\DdagQ{\fsch{X}}$-modules}

\subsection{On the stability under base change. }
Let $\fsch{P}$ be a smooth formal scheme over $R$.
Let $\E$ be an overholonomic $\DdagQ{\fsch{P}/\S}$-module (resp.\ an
overholonomic complex of $\DdagQ{\fsch{P}/\S}$-modules).
Recall that $\E$ is said to be {\em overholonomic after any base change}
if for any morphism $k \to k'$ of perfect fields, 
putting 
$R' := R \otimes _{W (k)} W (k')$, 
$\S := \mathrm{Spf} (R)$, $\S ':=
\mathrm{Spf} (R')$, 
$f\colon\fsch{P} ':= \fsch{P}\times _{\S} \S ' \to \fsch{P}$
the canonical morphism, then the object
$f ^* (\E):= \D ^\dag _{\fsch{P}'/\S', \Q} \otimes _{f ^{-1} \D ^\dag
_{\fsch{P}/\S,\Q}} f ^{-1} \E$ remains to be an overholonomic
$\DdagQ{\fsch{P'}/\S'}$-module (resp.\
complex of $\DdagQ{\fsch{P}'/\S'}$-modules).

We remark that the base change functor $f ^{*}$ is exact, commutes with
push-forwards, pull-backs, dual functors, local cohomological functors and
preserves the coherence and the holonomicity (use Virrion's
characterization of the holonomicity of \cite[III.4]{virrion}). For instance,
if $Y$ is a subvariety of the special fiber of $\fsch{P}$ and $Y':= f
^{-1} (Y)$, for any overholonomic complex $\E$ of
$\DdagQ{\fsch{P}/\S}$-modules, we get the isomorphism of coherent
complexes $\R \underline{\Gamma} ^{\dag} _{Y '} (f ^{*} \E) \riso f ^{*}
\R \underline{\Gamma} ^{\dag} _{Y} (\E)$ of
$\DdagQ{\fsch{P}'/\S'}$-modules.

\begin{lem}
 \label{lemstabbchg}
 Let $\fsch{P}$ be a proper smooth formal scheme over $R$, and $\E$ be an
 object of $F \text{-} D ^\mathrm{b} _{\mathrm{ovhol}}
 (\DdagQ{\fsch{P}})$, {\it i.e.}\ an overholonomic complex of
 $\DdagQ{\fsch{P}}$-modules endowed with Frobenius structure.
 Then $\E$ is overholonomic after any base change. 
\end{lem}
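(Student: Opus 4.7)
The plan is to prove this by devissage, reducing the lemma first to overconvergent $F$-isocrystals via the structure theory of $F$-overholonomic complexes, and then to an explicit log-smooth situation via Kedlaya's semistable reduction theorem --- exactly the tool cited in the introduction as the $p$-adic substitute for $b$-functions. First, I would observe that the condition ``overholonomic after any base change'' is visibly stable under shifts, cones, direct summands, and the six operations: this follows from the commutation formulas for $f^{*}$ recorded just above the lemma (exactness; commutation with $\R\underline{\Gamma}^\dag$, push-forwards, pull-backs, and duals; preservation of coherence and holonomicity via Virrion's criterion). Since $\fsch{P}$ is proper and $\E$ carries a Frobenius structure, I would then invoke Caro's devissage for $F$-overholonomic complexes to place $\E$ inside the thick triangulated subcategory generated by objects of the form $\sp_+ M$, where $M$ is an overconvergent $F$-isocrystal on a smooth locally closed subvariety $Y\subset P:=\fsch{P}_k$. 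It thus suffices to treat these generators.

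Fix such a pair $(Y,M)$. Kedlaya's semistable reduction theorem produces a proper generically étale alteration $\pi\colon Y_1\to Y$, a smooth compactification $\overline{Y}_1\supset Y_1$, and an SNC divisor $D_1\subset\overline{Y}_1$ such that $\pi^{*}M$ extends to a log-$F$-isocrystal on $(\overline{Y}_1,D_1)$. In this log-smooth model the associated arithmetic $\D$-module is built from an overconvergent locally free module on a formal smooth lift via iterated $\R\underline{\Gamma}^\dag$ along the components of $D_1$ and push-forward along a proper morphism into $\fsch{P}$; by the commutation formulas above, $f^{*}$ commutes with all these operations and preserves each ingredient, so $f^{*}(\pi_{+}\sp_{+}\pi^{*}M)$ is overholonomic on the base-changed target. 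To descend from $\pi^{*}M$ back to $M$, one uses that $\pi$ is a proper generically étale alteration and that $M$ carries a Frobenius structure: a normalised trace argument exhibits $\sp_{+}M$ as a direct summand of $\pi_{+}\sp_{+}\pi^{*}M$ in $F\text{-}D^{\mathrm{b}}_{\mathrm{ovhol}}(\DdagQ{\fsch{P}})$, and overholonomicity after any base change passes to direct summands, closing the argument.

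The conceptual heart is Kedlaya's theorem, invoked precisely to bypass the absence of $b$-functions; the principal technical obstacle is organising Caro's devissage and the Frobenius descent along $\pi$ entirely within the $F$-overholonomic category while simultaneously checking compatibility with $f^{*}$ at each stage. The Frobenius structure is essential for the trace splitting that allows the descent from $\pi^{*}M$ to $M$, which is why the Frobenius hypothesis is imposed in the statement.
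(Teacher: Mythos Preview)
Your strategy works in principle, but it takes a substantially longer route than the paper does, and it misidentifies where Kedlaya's theorem enters the picture. The paper's argument is much shorter: by devissage one reduces to $\E\in F\text{-}D^{\mr{b}}_{\mr{ovhol}}(Y,\fsch{P})$ for some quasi-projective subvariety $Y$ of the special fiber; one then chooses an immersion $Y\hookrightarrow\fsch{Q}$ into a \emph{projective} smooth formal scheme, transfers $\E$ to the corresponding object $\FF$ on $\fsch{Q}$ via the equivalence $F\text{-}D^{\mr{b}}_{\mr{ovhol}}(Y,\fsch{P})\simeq F\text{-}D^{\mr{b}}_{\mr{ovhol}}(Y,\fsch{Q})$, and observes that $f^{*}\FF$ is holonomic with Frobenius structure on a projective base. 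The key black box is then \cite{caro-stab-holo}: on quasi-projective formal schemes, holonomic $+$ Frobenius implies overholonomic. Pulling back through the commutation formulas finishes the proof. In particular, Kedlaya's semistable reduction is \emph{not} used here at all; in this paper it appears only in the proof of the Key Lemma in \S2, for the construction of the nearby cycle functor. Likewise, the Frobenius hypothesis is used not for a trace splitting but precisely to invoke \cite{caro-stab-holo}.

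Your approach, by contrast, essentially reproves a chunk of the Caro--Tsuzuki overholonomicity theorem for overconvergent $F$-isocrystals (devissage to $\sp_{+}M$, alteration via Kedlaya, explicit analysis in the log-smooth model, trace descent). This is a legitimate path, but it is considerably heavier, and the step you gloss over---that in the log-smooth model the resulting module is overholonomic after base change---still requires real work (e.g.\ you implicitly assume a global smooth formal lift of $\overline{Y}_1$, which need not exist; one has to arrange things projectively or argue locally and then glue via proper push-forward). The paper sidesteps all of this by passing to a projective ambient $\fsch{Q}$ and quoting \cite{caro-stab-holo} directly.
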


\begin{proof}
Let   $k \to k'$ be a morphism of perfect fields, 
$R' := R \otimes _{W (k)} W (k')$, 
$\S := \mathrm{Spf} (R)$, $\S ':=
 \mathrm{Spf} (R')$, $f\colon\fsch{P} ':= \fsch{P} \times _{\S} \S
 '\to\fsch{P}$ be the canonical morphism.
 We have to prove that $f ^{*} (\E)$ is overholonomic. By devissage, we
 can suppose that there exists a quasi-projective subvariety $Y$ of the
 special fiber of $\fsch{P}$ such that $\E \in F \text{-} D ^\mathrm{b}
 _{\mathrm{ovhol}} (Y, \fsch{P})$, {\it i.e.}\ by definition of this
 category such that $\R \underline{\Gamma} ^{\dag} _{Y} (\E) \riso \E$.
 There exists an immersion of the form $Y \hookrightarrow \fsch{Q}$,
 where $\fsch{Q}$ is a projective formal scheme over $R$.
 We get an immersion $Y \hookrightarrow \fsch{P} \times \fsch{Q}$ and
 two projections
 $p _{1}\colon\fsch{P} \times \fsch{Q}\to  \fsch{P}$,
 $p _{2}\colon\fsch{P} \times \fsch{Q}\to  \fsch{Q}$.
 We recall that the categories
 $F \text{-} D ^\mathrm{b} _{\mathrm{ovhol}} (Y, \fsch{P})$ and
 $F \text{-} D ^\mathrm{b} _{\mathrm{ovhol}} (Y, \fsch{Q})$
 are canonically equivalent.
 Let $\FF$ be the object of $F \text{-} D ^\mathrm{b} _{\mathrm{ovhol}} (Y,
 \fsch{Q})$ corresponding to $\E$, {\it i.e.}\ $\E \riso p _{1+} \R
 \underline{\Gamma} ^{\dag} _{Y} p ^{!} _2 (\FF)$.
 Let $Y '$, $ \fsch{Q} '$, $p ' _1$, $p ' _2$ be the base change of $Y
 $, $ \fsch{Q}$, $p  _1$, $p _2$ by $f$. The complex $f ^{*} (\FF)$
 is endowed with a Frobenius structure by using \cite[2.1.6]{Be2} and is
 holonomic because $f ^{*}$ preserves the holonomicity.
 Therefore $f
 ^{*} (\FF)$ is overholonomic by \cite{caro-stab-holo} since $\fsch{Q}'$
 is projective. Since $f ^{*} (\E) \riso p ' _{1+}\R
 \underline{\Gamma}^{\dag} _{Y'}p ^{\prime !} _2 (f
 ^{*}\FF)$, the stability of overholonomicity implies that
 $f ^{*} (\E)$ is also overholonomic.
\end{proof}

\begin{lem}
 \label{lemm1.1}
 Let $\fsch{P}$ be a smooth formal scheme over $R$.
 We denote by $\mr{Ovhol}(\fsch{P})$ the subcategory of the category
 $\mr{Mod}(\DdagQ{\fsch{P}})$ of $\DdagQ{\fsch{P}}$-modules consisting of
 overholonomic $\DdagQ{\fsch{P}}$-modules after any base change.
 The category $\mr{Ovhol}(\fsch{P})$ is a thick abelian subcategory of
 $\mr{Mod}(\DdagQ{\fsch{P}})$.
\end{lem}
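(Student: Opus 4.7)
The plan is to deduce the statement formally from two already available ingredients: (i) for every perfect field extension $k \to k'$ and corresponding base change $\fsch{P}' = \fsch{P}\times_{\S}\S'$, the category of overholonomic $\DdagQ{\fsch{P}'}$-modules is known to be a thick abelian subcategory of $\mathrm{Mod}(\DdagQ{\fsch{P}'})$ (by the second author's earlier work on overholonomicity); and (ii) the base-change functor $f^{*}$ is exact at the level of modules, as explicitly recalled in the paragraph just before the lemma. Granting these, the proof reduces to a diagram chase performed one base change at a time.

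First I would check closure under subobjects, quotients, and extensions. Given a short exact sequence $0 \to \E' \to \E \to \E'' \to 0$ in $\mathrm{Mod}(\DdagQ{\fsch{P}})$ with two of the three terms in $\mathrm{Ovhol}(\fsch{P})$, applying $f^{*}$ for any base change $f$ yields a short exact sequence
\begin{equation*}
 0 \to f^{*}\E' \to f^{*}\E \to f^{*}\E'' \to 0
\end{equation*}
in $\mathrm{Mod}(\DdagQ{\fsch{P}'})$ in which two of the terms are overholonomic by hypothesis. By (i) the third is also overholonomic, and since $f$ was arbitrary the third object of the original sequence lies in $\mathrm{Ovhol}(\fsch{P})$.

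Next, to see that $\mathrm{Ovhol}(\fsch{P})$ is abelian with exact inclusion, I would note that for any morphism $\phi\colon \E \to \FF$ between two objects of $\mathrm{Ovhol}(\fsch{P})$, its kernel, image, and cokernel formed in the ambient category appear as subobjects or quotients of $\E$ or $\FF$. By the previous paragraph they all lie in $\mathrm{Ovhol}(\fsch{P})$, so the ambient abelian structure restricts to one on $\mathrm{Ovhol}(\fsch{P})$ and the inclusion functor is exact.

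The only genuine input is the combination of (i) and the exactness of $f^{*}$ at the module level; both are already indicated in the paragraph immediately above the lemma, so no real obstacle remains beyond the routine verification that $f^{*}$ preserves the forming of kernels and cokernels, which is a direct consequence of its exactness.
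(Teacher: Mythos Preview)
Your overall strategy---reduce to a single base change at a time using the exactness of $f^{*}$---is correct and is exactly what the paper does implicitly. Two issues remain, one a genuine logical slip and one a matter of where the real content lies.

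The slip is in your second paragraph. The 2-out-of-3 property for short exact sequences established in your first paragraph does \emph{not} imply closure under kernels and cokernels of arbitrary morphisms. (Even-dimensional vector spaces inside all finite-dimensional vector spaces satisfy 2-out-of-3 by additivity of dimension, yet a rank-one endomorphism of $k^{2}$ has one-dimensional kernel, image, and cokernel.) For $\phi\colon\E\to\FF$ the sequences $0\to\ker\phi\to\E\to\mathrm{im}\,\phi\to0$ and $0\to\mathrm{im}\,\phi\to\FF\to\coker\phi\to0$ each have only \emph{one} term known to lie in $\mr{Ovhol}(\fsch{P})$, so 2-out-of-3 yields nothing. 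The repair is immediate once you use (i) properly: $f^{*}\phi$ is a morphism between overholonomic $\DdagQ{\fsch{P}'}$-modules, so by the \emph{abelianness} in (i) its kernel and cokernel are overholonomic, and by exactness $f^{*}(\ker\phi)\cong\ker(f^{*}\phi)$, $f^{*}(\coker\phi)\cong\coker(f^{*}\phi)$.

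The second point is that you have black-boxed (i) as prior work, whereas the paper supplies the argument rather than citing it---and this argument is the actual content of the lemma. The paper observes that overholonomic modules are holonomic by \cite[4.3]{Cahol}, that kernels and cokernels of maps between holonomic modules are holonomic by \cite[2.14]{Cahol}, and then uses the exactness of the duality functor $\mb{D}$ on holonomic modules to conclude that those kernels and cokernels are in fact overholonomic. Your proposal is thus structurally sound but defers precisely the step that carries the weight.
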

\begin{proof}
 To check this, we need to show that kernel and cokernel are
 overholonomic. Let $\E\rightarrow\FF$ be a homomorphism of
 overholonomic modules. Then these are holonomic by
 \cite[4.3]{Cahol}. Thus the kernel and cokernel are holonomic by [{\it
 ibid.}, 2.14]. Since the functor $\mb{D}$ is exact on the category of
 holonomic modules, we get the overholonomicity of kernel and cokernel.
 The thickness can be seen easily.
\end{proof}

\subsection{}
A variety ({\it i.e.}\ a
reduced scheme of finite type over $k$) $X$ is said to be {\em
realizable} if there exists a smooth proper formal scheme $\fsch{P}$
over $R$ such that $X$ can be embedded into $\fsch{P}$. Since the
cohomology theory does not change if we take the associated reduced
scheme, in the following, we assume that schemes are always reduced.
For any realizable variety $X$, choose $X\hookrightarrow\fsch{P}$ an
immersion with $\fsch{P}$ a smooth proper formal scheme over $R$.
Then by \cite[4.16]{Caovhol}, the category of overholonomic
$\DdagQ{\fsch{P}}$-complexes $\E$ after any base change which is
supported on $\overline{X}$ and $\mb{R}\Gammadag_{\overline{X}\setminus
X} (\E)=0$ does not depend on the choice of $\fsch{P}$. This category
is denoted by $D^{\mr{b}}_{\mr{ovhol}}(X/K)$ (in this paper, from now
on, we only work with overholonomic complexes after any base change, so
we keep the notation concerning overholonomic complexes in order to
avoid getting too heavy notation).

Let $X$ be a realizable variety. From \cite{AC}, we define a
t-structure on $D^{\mr{b}}_{\mr{ovhol}}(X/K)$, and its heart is denoted
by $\mr{Ovhol}(X/K)$.

\begin{lem}
 Let $X$ be a realizable variety. Then for any
 overholonomic after any base change module $\E\in\mr{Ovhol}(X/K)$, any
 ascending or descending chain of overholonomic submodules of $\E$ is
 stationary.
\end{lem}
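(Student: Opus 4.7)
The plan is to prove the stronger statement that every $\E\in\mr{Ovhol}(X/K)$ has finite length in this abelian category; the Jordan--H\"older principle then yields both the ascending and descending chain conditions. Fix a closed immersion $X\hookrightarrow\fsch{P}$ into a smooth proper formal scheme over $R$. By the definition of $\mr{Ovhol}(X/K)$, the object $\E$ is represented by an overholonomic-after-any-base-change $\DdagQ{\fsch{P}}$-module supported on $\overline{X}$ with $\mb{R}\Gammadag_{\overline{X}\setminus X}(\E)=0$. Lemma \ref{lemm1.1} guarantees that kernels and cokernels of morphisms between such modules remain overholonomic after any base change, so every subobject of $\E$ in $\mr{Ovhol}(X/K)$ and each successive quotient again belong to $\mr{Ovhol}(X/K)$. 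It thus suffices to bound the length of $\E$.

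I proceed by induction on $d:=\dim\mr{Supp}(\E)$. If $d=0$, then $\mr{Supp}(\E)$ is a finite set of closed points, and the $p$-adic analogue of Kashiwara's equivalence identifies overholonomic modules supported at a closed point with finite-dimensional $K$-vector spaces, which visibly have finite length. For the inductive step, choose a dense smooth open subvariety $U$ of the top-dimensional part of $\mr{Supp}(\E)$ on which $\E$ restricts to an $\O$-coherent module (an overconvergent isocrystal) of generic rank $r$. For any subobject $\E'\subset\E$, after shrinking $U$ if necessary, $\E'|_U$ is again $\O$-coherent of some rank $r'\leq r$. Along any chain of subobjects of $\E$ this integer generic rank is monotone and bounded, so stabilizes after finitely many steps; beyond that point the consecutive subquotients are supported on $\mr{Supp}(\E)\setminus U$, a closed subvariety of dimension strictly less than $d$, and the inductive hypothesis applied to each such subquotient bounds the remaining length.

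The main obstacle is the $p$-adic generic smoothness input, namely that a holonomic $\DdagQ{\fsch{P}}$-module is $\O$-coherent on a dense smooth open of each top-dimensional irreducible component of its support, and that this property persists for arbitrary overholonomic submodules on a suitably chosen open. This is a nontrivial but available ingredient from Caro's theory of holonomic arithmetic $\D$-modules, and is what makes the generic rank a well-defined monotone $\mb{N}$-valued invariant on subobjects. A minor subsidiary point, handled automatically by the fixed embedding into $\fsch{P}$, is that the support of each subquotient is a closed subvariety of $X$, hence itself realizable, so the induction never leaves the setting of the lemma.
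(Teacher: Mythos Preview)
Your approach---induction on the dimension of the support, with generic $\O$-coherence supplying a rank invariant---is exactly the paper's strategy. But there is a genuine gap in the final step. You write that once the generic rank stabilizes along a chain, the consecutive subquotients are supported on $\mr{Supp}(\E)\setminus U$, ``and the inductive hypothesis applied to each such subquotient bounds the remaining length.'' This does not follow: knowing that each individual subquotient $\E_{i+1}/\E_i$ (for $i\ge N$) has finite length says nothing about how many such steps there are. You must instead apply the inductive hypothesis to a \emph{single} lower-dimensional module. For an ascending chain, once $\E_i|_U=\E_N|_U$ for all $i\ge N$, the quotients $\E_i/\E_N$ are all supported on $X\setminus U$ and form an ascending chain inside the maximal submodule of $\E/\E_N$ with support in $X\setminus U$; that module has support of dimension $<d$, so by induction the chain terminates. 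The paper organizes this differently: it first replaces $\E$ by $j_+(\E|_U)$ (the kernel and cokernel of $\E\to j_+j^+\E$ being lower-dimensional, hence handled by induction), and then treats $j_+(\G)$ by a second induction on the rank of $\G$, using that $j_{!+}(\G')$ is irreducible whenever $\G'\subset\G$ is.

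Two smaller points. First, the generic $\O$-coherence input you invoke (\cite[3.7]{Cahol}) requires $k$ uncountable; the paper begins by base-changing to such a field, which is legitimate precisely because of the ``after any base change'' hypothesis. Second, your ``after shrinking $U$ if necessary'' for each subobject is dangerous in an infinite chain: you need one $U$ that works for all submodules simultaneously. This does hold once $\E|_U$ is an isocrystal (overholonomic submodules of an isocrystal on smooth $U$ are again isocrystals on the same $U$), but you should assert it rather than suggest further shrinking.
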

\begin{proof}
 By base change, we can suppose that $k$ is uncountable.
 We prove the claim using the induction on
 the dimension of the support. Let $\E\in\mr{Ovhol}(X)$. From
 \cite[3.7]{Cahol} \footnote{In the statement of \cite[3.7]{Cahol}, 
 we need to add that $k$ is uncountable or that
 the property to have finite fibers is stable under base change.} 
there exists an open dense subscheme $U$ of $X$ such
 that $X\setminus U$ is a divisor and $\G :=  \E|_U\in\mr{Isoc} ^{\dag
 \dag}(U)$. By induction hypothesis, we reduce to check that $j_+(\G)$ satisfies 
 the ascending (resp. descending) chain condition.
 Take an irreducible submodule
 $\G'\subset\G$ in $\mr{Ovhol}(U)$. From \cite[1.4.7]{AC},  since $\G'$
 is irreducible then so is  $j_{!+}(\G')$. Thus by induction hypothesis,
 $j_+(\G')$ satisfies
 the ascending (resp. descending) chain condition.
 Since $j _+$ is exact, if $\G$ is not irreducible then 
 we conclude by using a second induction on the generic rank (on the
 rigid analytic spaces) of $\G$.
\end{proof}

\begin{rem*}
 For a smooth formal scheme $\fsch{P}$ (which may not be proper), we may
 also show that any overholonomic module on $\fsch{P}$ satisfies the
 ascending and descending chain conditions. The proof is similar.
\end{rem*}

\begin{cor}
 \label{constiFrobend}
 Let $\E\in\mr{Ovhol}(X)$, and assume that $\E$ can be
 endowed with a $s'$-th Frobenius structure for an integer $s'$ which is
 a multiple of $s$. Then any constituents of $\E$ in $\mr{Ovhol}(X)$ can
 be endowed with a $s''$-th Frobenius structure for some $s''$  a
 multiple of $s'$.
\end{cor}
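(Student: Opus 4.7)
The plan is to apply Jordan--Hölder theory inside the abelian category $\mr{Ovhol}(X)$: the previous lemma supplies finite length, and the Frobenius pullback, being an exact auto-equivalence, must permute the simple constituents up to isomorphism, so a sufficiently large power of it fixes each constituent individually.

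First, by the previous lemma, $\E$ satisfies both the ascending and descending chain conditions in $\mr{Ovhol}(X)$, hence has finite length. Choose a Jordan--Hölder filtration
$0 = \E_0 \subset \E_1 \subset \cdots \subset \E_n = \E$
with simple successive quotients $\FF_i := \E_i/\E_{i-1}\in\mr{Ovhol}(X)$. Any constituent of $\E$ in $\mr{Ovhol}(X)$ is isomorphic to some $\FF_i$, so it suffices to produce a Frobenius structure on each $\FF_i$ whose order is a multiple of $s'$.

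Second, write $s' = ds$ and let $\phi\colon \sigma^{d*}\E \riso \E$ be the given $s'$-th Frobenius structure. Because $\sigma$ is an automorphism of $R$, the pullback $\sigma^{d*}$ is an exact auto-equivalence of $\mr{Ovhol}(X)$ (a special case of the base-change functor recalled before Lemma~\ref{lemstabbchg}), and in particular sends simple objects to simple objects. Applying it to the filtration above and transporting through $\phi$ produces a second Jordan--Hölder filtration of $\E$ whose successive quotients are $\sigma^{d*}\FF_1,\ldots,\sigma^{d*}\FF_n$. The uniqueness part of Jordan--Hölder then yields a permutation $\pi\in\mathfrak{S}_n$ such that $\sigma^{d*}\FF_i\cong \FF_{\pi(i)}$ in $\mr{Ovhol}(X)$ for every $i$.

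Finally, let $m$ be the order of $\pi$ (one may even take $m=n!$). Iterating the previous isomorphism $m$ times gives
$\sigma^{md*}\FF_i \cong \FF_{\pi^m(i)} = \FF_i$,
which is precisely an $ms'$-th Frobenius structure on $\FF_i$; setting $s'' := ms'$, a multiple of $s'$, concludes the proof. The only nontrivial ingredient is Jordan--Hölder uniqueness in $\mr{Ovhol}(X)$, which rests on the finite-length statement of the previous lemma; everything else is formal manipulation of the exact auto-equivalence $\sigma^{d*}$.
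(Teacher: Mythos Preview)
Your argument is correct and is precisely the standard Jordan--H\"older permutation argument that the paper invokes by citing \cite[6.0-15]{CM}: finite length in $\mr{Ovhol}(X)$ from the preceding lemma, Frobenius pullback as an exact auto-equivalence permuting the simple subquotients, then a power of Frobenius fixing each one. One minor notational point: the relevant auto-equivalence is not literally the base-change $\sigma^{d*}$ along $R\to R$ but the Frobenius pullback $F^{*}$ on the ambient formal scheme (compatible with $\sigma^{d}$ on the base); that this is an exact equivalence is Berthelot's Frobenius descent \cite{Be2}, and with this in place your proof goes through verbatim.
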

\begin{proof}
 The verification is similar to \cite[6.0-15]{CM}.
\end{proof}

\subsection{}
Let $X$ be a realizable variety.
Let $\mr{Hol}_F(X)'$ be the subcategory of $\mr{Ovhol}(X)$ whose objects 
can be endowed with $s'$-th Frobenius structure for some integer $s'$
which is a multiple of $s$, and let $\mr{Hol}_F(X)$ be the thick
abelian subcategory generated by $\mr{Hol}_F(X)'$ in
$\mr{Ovhol}(X)$. We denote by
$D^{\mr{b}}_{\mr{hol},F}(X)$ the triangulated full subcategory of
$D^{\mr{b}}_{\mr{ovhol}}(X)$ such that the cohomologies are in
$\mr{Hol}_F(X)$. By Lemma \ref{lemm1.1} and Corollary
\ref{constiFrobend}, we have:

\begin{cor*}
 Any object of $\mr{Hol}_F(X)$ can be written as extensions of
 modules in $\mr{Hol}_F(X)'$.
\end{cor*}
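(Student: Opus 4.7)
The plan is to recognize the desired filtration property as membership in a thick abelian subcategory. I would let $\CC$ denote the full subcategory of $\mr{Ovhol}(X)$ whose objects $\M$ admit a finite filtration $0=\M_0\subset\M_1\subset\cdots\subset\M_n=\M$ with $\M_i/\M_{i-1}\in\mr{Hol}_F(X)'$ for every $i$. Then $\mr{Hol}_F(X)'\subset\CC$ tautologically, and $\CC$ is closed under extensions by concatenation of filtrations. Since $\mr{Hol}_F(X)$ is by definition the smallest thick abelian subcategory of $\mr{Ovhol}(X)$ containing $\mr{Hol}_F(X)'$, the corollary follows as soon as $\CC$ is shown to be stable under subobjects and quotients in $\mr{Ovhol}(X)$.

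The key preliminary step is to handle subobjects and quotients of a single module $T\in\mr{Hol}_F(X)'$. For this I would invoke the chain-condition lemma proved above, which makes $\mr{Ovhol}(X)$ both artinian and noetherian, so every object admits a Jordan--H\"older composition series of finite length. Any subobject $\NN\subset T$ sits in a composition series of $T$, so its simple factors appear as constituents of $T$; by Corollary~\ref{constiFrobend} these simple factors lie again in $\mr{Hol}_F(X)'$ (after enlarging the Frobenius exponent), hence $\NN\in\CC$. The symmetric argument applied to $T/\NN$ gives closure under quotients of $\mr{Hol}_F(X)'$-modules.

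The general case is then a mechanical splicing. Given $\M\in\CC$ with filtration $\{\M_i\}$ and a submodule $\NN\subset\M$, the induced filtration $\NN_i:=\NN\cap\M_i$ has successive quotients embedding into $\M_i/\M_{i-1}\in\mr{Hol}_F(X)'$, hence lying in $\CC$ by the previous step; concatenating their filtrations shows $\NN\in\CC$. The image filtration on $\M/\NN$ has successive quotients that are quotients of the $\M_i/\M_{i-1}$, so the same reasoning delivers $\M/\NN\in\CC$.

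I expect the only real obstacle to be the passage from Frobenius structure on a module to Frobenius structure on its simple constituents; this is exactly the content of Corollary~\ref{constiFrobend}, and once the chain-condition lemma supplies finite composition series, the remaining closure properties are formal manipulations of filtrations.
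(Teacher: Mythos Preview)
Your proposal is correct and follows the same approach the paper has in mind: the paper simply records that the corollary is a consequence of the finite-length property of $\mr{Ovhol}(X)$ together with Corollary~\ref{constiFrobend}, and your argument spells out precisely how these two ingredients combine, by showing that the class $\CC$ of iterated extensions of $\mr{Hol}_F(X)'$-objects is already a thick abelian subcategory.
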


This corollary has the following consequences:

\begin{thm}
 \label{sixfuncforma}
 Let $f\colon X\rightarrow Y$ be a morphism between realizable
 varieties.

 \begin{enumerate}
  \item If $f$ is proper, $f_+$ induces the functor
	$D^{\mr{b}}_{\mr{hol},F}(X)\rightarrow
	D^{\mr{b}}_{\mr{hol},F}(Y)$.

  \item The functor $f^!$ induces
	$D^{\mr{b}}_{\mr{hol},F}(Y)\rightarrow
	D^{\mr{b}}_{\mr{hol},F}(X)$.

  \item The functor $\mb{D}$ induces the functor
	$D^{\mr{b}}_{\mr{hol},F}(X)^{\circ}\rightarrow
	D^{\mr{b}}_{\mr{hol},F}(X)$ such that
	$\mb{D}\circ\mb{D}\cong\mr{id}$.

  \item\label{tensprodpres}
       The functor $\widetilde{\otimes}$ (cf.\ \cite[1.1.6
       (ii)]{AC}) induces $D^{\mr{b}}_{\mr{hol},F}(X)\times
       D^{\mr{b}}_{\mr{hol},F}(X) \rightarrow
       D^{\mr{b}}_{\mr{hol},F}(X)$.
 \end{enumerate}
 Moreover, these functors satisfy the properties listed in
 \cite[1.3.14]{AC}.
\end{thm}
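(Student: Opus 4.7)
The strategy is a devissage reducing every claim to the subcategory $\mr{Hol}_F(X)'$ of modules genuinely carrying a Frobenius structure. The corollary immediately preceding the theorem tells us that every object of $\mr{Hol}_F(X)$ is an iterated extension in $\mr{Ovhol}(X)$ of objects of $\mr{Hol}_F(X)'$. Combining this with the standard truncation triangles in the t-structure of $D^{\mr{b}}_{\mr{ovhol}}(X)$, every object of $D^{\mr{b}}_{\mr{hol},F}(X)$ lies in the smallest strict triangulated subcategory of $D^{\mr{b}}_{\mr{ovhol}}(X)$ containing $\mr{Hol}_F(X)'$. Consequently, to check that a triangulated functor $T$ preserves $D^{\mr{b}}_{\mr{hol},F}$, it is enough to verify that $T$ applied to objects of $\mr{Hol}_F(X)'$ (or to pairs of such objects in the case of $\widetilde{\otimes}$) yields complexes whose cohomology modules lie in $\mr{Hol}_F(X)$.

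On objects carrying a Frobenius structure, each of the operations $f_+$ (for proper $f$), $f^!$, $\mb{D}$ and $\widetilde{\otimes}$ preserves the Frobenius structure by \cite[2.1.6]{Be2} and the usual compatibilities, and preserves overholonomicity by the second author's stability results for overholonomic complexes with Frobenius structure (for $\widetilde{\otimes}$ this is \cite[1.3.14]{AC}). Invoking Lemma \ref{lemstabbchg}, each of these functors in fact sends $\mr{Hol}_F(X)'$ into $F\text{-}D^{\mr{b}}_{\mr{ovhol}}$, so its cohomology modules already belong to $\mr{Hol}_F(X)'$, a fortiori to $\mr{Hol}_F(X)$.

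To conclude for a general $\E\in D^{\mr{b}}_{\mr{hol},F}(X)$, the truncation triangles together with the preceding corollary express $\E$ as an iterated cone of objects in $\mr{Hol}_F(X)'[n]$; the same filtration is used on the second argument of $\widetilde{\otimes}$. Applying $T$ produces distinguished triangles relating $T(\E)$ to complexes whose cohomology lies in $\mr{Hol}_F(X)'$, and the long exact cohomology sequences, combined with the thickness of $\mr{Hol}_F(X)$ in $\mr{Ovhol}(X)$ (closure under subobjects, quotients and extensions, established in Lemma \ref{lemm1.1}), force each cohomology of $T(\E)$ to lie in $\mr{Hol}_F(X)$. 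The ``moreover'' clause is then formal: the properties of \cite[1.3.14]{AC} are known at the level of $D^{\mr{b}}_{\mr{ovhol}}$ and restrict to the full subcategory $D^{\mr{b}}_{\mr{hol},F}$. The main obstacle is part (\ref{tensprodpres}): as noted in the introduction, overholonomicity is not known to be stable under $\widetilde{\otimes}$ in the absence of a Frobenius structure, so the devissage to the Frobenius case through the preceding corollary is indispensable here.
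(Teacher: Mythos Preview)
Your argument is correct and is precisely the devissage the paper has in mind: the text states the theorem immediately after the corollary with the words ``This corollary has the following consequences,'' leaving the details implicit, and you have spelled them out accurately. One small slip: the thickness of $\mr{Hol}_F(X)$ inside $\mr{Ovhol}(X)$ is not the content of Lemma~\ref{lemm1.1} (which concerns $\mr{Ovhol}(\fsch{P})$ inside $\mr{Mod}(\DdagQ{\fsch{P}})$) but holds by the very definition of $\mr{Hol}_F(X)$ as the thick abelian subcategory generated by $\mr{Hol}_F(X)'$.
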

We recall that when we take an embedding $X\hookrightarrow\PP$ into a
proper smooth formal scheme $\PP$, then $\widetilde{\otimes}$ can be
written as $(-)\otimes^\dag_{\O_{\PP}}(-)[-\dim(\PP)]$, where
$\otimes^\dag_{\O_{\PP}}$ is the usual weakly completed tensor product.
In the following, we introduce the functor
$\otimes\colon D^{\mr{b}}_{\mr{hol},F}(X)\times
D^{\mr{b}}_{\mr{hol},F}(X)\rightarrow D^{\mr{b}}_{\mr{hol},F}(X)$ to be
$\DD\bigl(\DD(-)\widetilde{\otimes}\DD(-)\bigr)$ as in
\cite[1.1.6 (iii)]{AC}.

\begin{rem*}
 Even if we replace $D^{\mr{b}}_{\mr{hol},F}$ by
 $D^{\mr{b}}_{\mr{ovhol}}$, the theorem holds except for
 \ref{tensprodpres}, which has been checked by the second author.
\end{rem*}

\subsection{}
Using this category, we can state our main theorem as follows:

\begin{thm*}
 Let $X$ be a realizable variety. Then the canonical functor
 \begin{equation*}
  D^{\mr{b}}(\mr{Hol}_F(X/K))\rightarrow
   D^{\mr{b}}_{\mr{hol},F}(X/K)
 \end{equation*}
 is an equivalence of categories.
\end{thm*}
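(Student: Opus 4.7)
The plan is to follow Beilinson's strategy in \cite{Be}, replacing his classical use of $b$-functions by the $p$-adic unipotent nearby cycles functor constructed in section 2.

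Since $\mr{Hol}_F(X/K)$ is a thick abelian subcategory of the heart $\mr{Ovhol}(X/K)$ of the natural t-structure on $D^{\mr{b}}_{\mr{ovhol}}(X/K)$, that t-structure restricts to a bounded t-structure on $D^{\mr{b}}_{\mr{hol},F}(X/K)$ with heart $\mr{Hol}_F(X/K)$, and the canonical realization functor of the statement is t-exact. By the standard Beilinson--Bernstein formalism, both full faithfulness and essential surjectivity of this realization follow from the single assertion that the canonical map
\begin{equation*}
\Ext^n_{\mr{Hol}_F(X/K)}(A,B)\longrightarrow \mathrm{Hom}_{D^{\mr{b}}_{\mr{ovhol}}(X/K)}(A,B[n])
\end{equation*}
is bijective for all $A,B\in \mr{Hol}_F(X/K)$ and all $n\ge 0$.

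The cases $n=0,1$ are automatic from the fullness and thickness of $\mr{Hol}_F$ inside $\mr{Ovhol}$. For $n\ge 2$, Beilinson's lemma reduces the bijectivity to an effaceability statement: every class $\alpha$ in the target can be killed by pulling back along some epimorphism $\widetilde A \twoheadrightarrow A$ in $\mr{Hol}_F(X/K)$. To construct $\widetilde A$ I would imitate the gluing construction of \cite{Beglue}. Using the preceding corollary, which writes every object of $\mr{Hol}_F(X)$ as an iterated extension of modules admitting a Frobenius structure, I devissage to the case where $A$ is itself of this type and, up to a further reduction, is the pushforward of a unipotent overconvergent $F$-isocrystal from a smooth affine open equipped with a chosen non-constant function $f$. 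One then takes $\widetilde A$ to be Beilinson's maximal extension $\Xi_f(A)$, assembled from the unipotent nearby cycles functor $\Psi_f^{\mr{un}}$ of section 2. The six-operations stability of Theorem \ref{sixfuncforma} guarantees that $\Xi_f(A)\in\mr{Hol}_F(X/K)$, and the diagram chase of \cite{Beglue} transcribes without change to produce the required vanishing.

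The main obstacle is not the formal outline above but the construction of $\Psi_f^{\mr{un}}$ in the arithmetic $\D$-module setting. Since $b$-functions are unavailable $p$-adically, one must instead exploit Kedlaya's semistable reduction theorem for overconvergent $F$-isocrystals, as explained in the introduction; this is the substantive technical work of section 2, and once it is in place the theorem above follows by the formal scheme described.
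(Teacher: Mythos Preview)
Your outline has a genuine gap in the formal part, not only in the construction of nearby cycles. The claim that $\widetilde A=\Xi_f(j^+A)$ effaces an arbitrary class $\alpha\in\mr{Hom}_{D(X)}(A,B[n])$ is unjustified: the kernel of $\Xi_f(j^+A)\twoheadrightarrow j_+j^+A\cong A$ is $\Psi_f^{(1)}(j^+A)$, and the pullback of $\alpha$ along this epimorphism vanishes precisely when $\alpha$ factors through the boundary map to $\Psi_f^{(1)}(j^+A)[1]$. There is no reason this should hold for general $\alpha$ and $B$, and nothing in \cite{Beglue} proves such a vanishing --- that paper glues modules from open and closed strata, it does not efface Ext groups in $D(X)$.

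The paper's proof, following \cite{Be} closely, has two distinct ingredients that you have conflated. The substantive effaceability occurs at \emph{generic points}: one shows that $\mr{real}_\eta\colon D^{\mr{b}}(M(\eta))\to D(\eta)$ is an equivalence for each generic point $\eta$ by an induction on $\dim X$, using a smooth affine fibration $U\to Z$ with one-dimensional fibres and the degeneration of its Leray spectral sequence to construct, for each class, a monomorphism $N\hookrightarrow O$ killing it. This step uses only the six-functor package of Theorem~\ref{sixfuncforma} and does not involve nearby cycles at all. Nearby and vanishing cycles enter only in the second ingredient, the induction step from the generic case to all of $X$: after reducing to $M,N$ supported on a divisor $Y=f^{-1}(0)$, one must compare $\mr{Ext}^i_{M(Y)}(M,N)$ with $\mr{Ext}^i_{M(X)}(M,N)$. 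Here the functor doing the work is $\Phi_f$, not $\Xi_f$: its exactness together with $\Phi_f|_{M(Y)}\cong\mr{id}$ give a map $\Phi_{f*}$ on Yoneda Ext which is automatically a left inverse to the pushforward $I$, and $\Xi_f$ appears only to supply the explicit chain of maps showing $I\circ\Phi_{f*}=\mr{id}$ (the construction of \cite[2.2.1]{Be}). Your sketch omits the generic step entirely and assigns $\Xi_f$ a role it does not play.
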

\begin{proof}
 With the aid of the next section, the proof of \cite{Be} can be adapted
 without any difficulties, so we only sketch the outline.
 We put $M(X):=\mr{Hol}_F(X/K)$ and
 $D(X):=D^{\mr{b}}_{\mr{hol},F}(X/K)$.
 First task is to define the functor $\mr{real}_X\colon
 D^{\mr{b}}(M(X))\rightarrow D(X)$ which induces an identity on the
 hearts of t-structures. This can be defined using the abstract
 non-sense presented in the appendix of \cite{Be}.

 Now, for a generic point $\eta\in X$,
 we put $D(\eta):=2\text{-}\indlim_{\eta\in U}D(U)$, and
 $M(\eta):=2\text{-}\indlim_{\eta\in U}M(U)$.
 We prove that the functor
 \begin{equation}
  \label{genericequiv}
  \mr{real}_{\eta}\colon D^{\mr{b}}(M(\eta))\rightarrow D(\eta)
 \end{equation} 
 is an equivalence. For the
 proof, we need six functors formalism as we constructed in Theorem
 \ref{sixfuncforma}, and we can copy \cite[2.1]{Be}: Let $\eta\in
 U\subset X$ be an open subscheme, and $M_U$, $N_U$ are in $M(U)$. It
 suffices to show the existence of an open subscheme $\eta\in
 V\xrightarrow{j}U$, $O_V\in M(V)$ and $N_V:=j^+N_U\hookrightarrow O_V$
 such that the induced homomorphism
 $\mr{Ext}^i_{D(U)}(M_U,N_U)\rightarrow\mr{Ext}^i_{D(V)}(M_V,O_V)$ is zero.
 We use the induction on the dimension of $X$, and assume that the
 equivalence of (\ref{genericequiv}) holds for any $Y$ of dimension less
 than $X$. By shrinking $U$, we may assume that $U$ is smooth, $M_U$ and
 $N_U$ are contained in $\mr{Isoc}^{\dag\dag}(U)$, there exists a
 smooth affine morphism $\P\colon U\rightarrow Z$ with $1$-dimensional
 fibers such that $Z$ is smooth and $L^q:=\H^q\P_+\shom(M_U,N_U)$
 are in $\mr{Isoc}^{\dag\dag}(U)$ for any $q$.
 We refer to \cite[A.5]{AC} for the relation between $\shom$ and
 $\mr{Hom}_{D(X)}$. For an open subscheme $Y\subset Z$, let
 $U_Y:=\P^{-1}(Y)$ and $\P_Y\colon U_Y\rightarrow Y$ is the one induced
 by $\P$. Since $\P$ is
 assumed to be affine and the dimension of each fiber is
 $1$-dimensional, we see that the Leray spectral sequence
 $E^{p,q}_2=\H^pp_{Z+}(L^q)\Rightarrow \mr{Ext}_{D(U)}^{p+q}(M_U,N_U)$
 degenerates at $E_3$, where $p_Z$ denotes the structural morphism of
 $Z$. Using this degeneration, Beilinson splits the
 construction problem of $O_V$ into two: one is an open subscheme
 $Y\subset Z$ and $N_{U_Y}\hookrightarrow P_{U_Y}$
 such that $\H^1\P_{Y+}\shom(M_{U_Y},N_{U_Y})\rightarrow
 \H^1\P_{Y+}\shom(M_{U_Y},P_{U_Y})$ is zero, and the other is an open
 subscheme $Y'\subset Z$ and $N_{U_{Y'}}\hookrightarrow Q_{U_{Y'}}$ such
 that $\H^p(Z,\H^0\P_+\shom(M_U,N_U))\rightarrow
 \H^p(Y',\H^0\P_{Y'+}(\shom(M_{U_{Y'}},Q_{U_{Y'}}))$.
 He constructs $P_{U_Y}$ and $Q_{U_{Y'}}$ by using the induction
 hypothesis on $Z$ and the cohomological functor formalism.
 Even thought the construction is technical, the argument is very
 general which can be copied just by using the existing formalism in the
 theory of arithmetic $\ms{D}$-modules, and we refer the further details
 to \cite[2.1]{Be}.
 \medskip

 Using the proven generic case, let us complete the proof. We use the
 induction on the dimension of $X$. Since $X$ is separated, any open
 immersion $j\colon U\rightarrow X$ is affine, and in particular, $j_+$
 sends $M(U)$ to $M(X)$ by \cite[1.3.13]{AC}.
 Thus, by standard argument, the claim is
 Zariski local, and we may assume $X$ to be affine. It suffices to show
 that for $M$, $N$ in $M(X)$, the homomorphism
 $\mr{Ext}^i_{M(X)}(M,N)\rightarrow\mr{Ext}^i_{D(X)}(M,N)$, where
 $\mr{Ext}^i$ denotes the Yoneda's Ext functor, is an isomorphism. Using
 the equivalence of (\ref{genericequiv}) and the formal
 properties of cohomological functors, it is a standard devissage
 argument to reduce to the case where the supports of
 $M$ and $N$ are dimension less than that of $X$ (cf.\
 \cite[2.2.2--2.2.4]{Be}).
 Take a morphism $f\colon X\rightarrow\mb{A}^1$ such that $Y:=f^{-1}(0)$
 contains the support of $M$ and $N$. By using the induction hypothesis,
 we have
 \begin{equation*}
  \mr{Ext}_{D(X)}^i(M,N)\xrightarrow[i^!]{\sim}
   \mr{Ext}^i_{D(Y)}(M,N)\cong\mr{Ext}^i_{M(Y)}(M,N),
 \end{equation*}
 where the inverse of the first isomorphism is $i_+$. It remains to
 show that the canonical homomorphism
 \begin{equation*}
  I\colon\mr{Ext}^i_{M(Y)}(M,N)\rightarrow
   \mr{Ext}^i_{M(X)}(M,N),
 \end{equation*}
 is a bijection for any $i$. For this we need the existence of the
 functors $\Phi_f$ and $\Xi_f$. These functors are defined and basic
 properties are shown in the next section (cf.\ Proposition
 \ref{mainlemthmcons}). In fact, the inverse of $I$ can be constructed
 as
 \begin{equation*}
  \Phi_{f*}\colon
  \mr{Ext}^i_{M(X)}(M,N)\xrightarrow{\Phi_f}
   \mr{Ext}^i_{M(Y)}(\Phi_f(M),\Phi_f(N))\cong
   \mr{Ext}^i_{M(Y)}(M,N)
 \end{equation*}
 where we used the exactness of $\Phi_f$ in the first homomorphism, and
 the isomorphism holds since $M$ and $N$ are supported on $Y$. Since
 $\Phi_{f*}\circ I=\mr{id}$, it remains to show that $I\circ
 \Phi_{f*}=\mr{id}$. To check this, for an
 extension class $0\rightarrow N\rightarrow
 C^1\rightarrow\dots\rightarrow C^i\rightarrow M\rightarrow0$ in $M(X)$,
 we need to show that the class $0\rightarrow
 N\rightarrow\Phi_f(C^1)\rightarrow\dots\rightarrow\Phi_f(C^i)\rightarrow
 N\rightarrow0$ is in the same class. For this, Beilinson constructs an
 ingenious sequence of homomorphisms connecting the two using $\Xi_f$.
 See \cite[2.2.1]{Be}.
\end{proof}

\begin{rem*}
 This theorem is a generalization of \cite[A.4]{AC}.
\end{rem*}

\section{Unipotent nearby cycle functor}

\subsection{}
Let $\Pi := \{(a,b) \in \Z ^2\,;\,  a \leq b\}$ be the partially ordered
set such that $(a,b) \leq (a',b') \Leftrightarrow a \geq a',\, b \geq
b'$. For an abelian category $\mathfrak{A}$, we denoted by $\mathfrak{A}
^{\Pi}$ the category of $\Pi$-shaped diagrams in $\mathfrak{A}$,
{\it i.e.}\ the category whose objects are $\E ^{\bullet,\bullet}= (\E
^{a,b} , \alpha ^{(a, b),(a',b')})$, where $(a, b), (a',b') $ runs
through elements of $\Pi$ so that $(a', b')\leq (a, b)$, $\E
^{a,b}$ belong to $\mathfrak{A}$, and $\alpha ^{(a, b),(a',b')}\colon 
\E ^{a',b'} \to \E ^{a,b}$ are morphisms of $\mathfrak{A}$, transitive with respect to the composition.
We denoted by $\mathfrak{A} ^{\Pi} _{\mathrm{a}}$ the full subcategory
of $\mathfrak{A} ^{\Pi}$ of objects $\E ^{\bullet,\bullet}= (\E ^{a,b} ,
\alpha ^{(a, b),(a',b')})$ such that, for any $a \leq b \leq c$, the
sequence $0 \to \E ^{b,c} \to \E ^{a,c} \to \E ^{a ,b}\to 0$ is
exact. These objects are called {\em admissible}. Since this subcategory
is closed under extension, this is an exact category so that the
canonical functor $\mathfrak{A} ^{\Pi} _{\mathrm{a}}\to \mathfrak{A}
^{\Pi} $ is exact.

Let $M$ (resp. $M ^{f}$) be the set of order-preserving maps $\phi\colon\Z \to \Z$ such
that $\lim_{i \to \pm \infty } \phi ( i) =  \pm \infty $ (resp. and also
$id +N \geq \phi  \geq id -N $ for some integer $N$ large enough).
For any $\phi \in M$, we put $\widetilde{\phi} (\E ^{\bullet, \bullet}) := (\E
^{\phi (a) , \phi (b)}) _{(a,b) \in \Pi}$.
Let $S$ (resp. $S ^f$) be the set of the canonical morphisms of the form
$\widetilde{\phi} (\E ^{\bullet, \bullet}) 
\to 
\widetilde{\psi} (\E ^{\bullet, \bullet})$,
where 
$\phi, \psi \in M$
(resp. $\phi, \psi \in M ^{f}$)
satisfy $\phi \geq \psi$ 
and 
$\E ^{\bullet, \bullet}
\in \mathfrak{A}
^{\Pi}$.
We denote by $S _{\mathrm{a}}$ (resp. $S _{\mathrm{a}} ^{f}$)
the elements of $S$ (resp. $S ^{f}$)
which are morphisms of $\mathfrak{A} ^{\Pi} _{\mathrm{a}}$ as well.

\begin{enumerate}
\item Following \cite[Appendix]{Beglue}, we put $\underleftrightarrow{\lim} \,
\mathfrak{A}:= S _{\mr{a}} ^{-1}\mathfrak{A} ^{\Pi} _{\mr{a}}$ and
$\underleftrightarrow{\lim} ^{\mathrm{ab}}\mathfrak{A}:= S ^{-1}
\mathfrak{A} ^{\Pi}$.
For any $\E _{\mathrm{a}}^{\bullet, \bullet}, \FF _{\mathrm{a}}
^{\bullet, \bullet}  \in \underleftrightarrow{\lim} \, \mathfrak{A}$ and
for any $\E ^{\bullet, \bullet}, \FF ^{\bullet, \bullet} \in
\underleftrightarrow{\lim} ^{\mathrm{ab}} \, \mathfrak{A}$ we have the
equalities
\begin{gather}
 \notag
 \mathrm{Hom} _{\underleftrightarrow{\lim} \, \mathfrak{A}}
 (\E _{\mathrm{a}}^{\bullet, \bullet}, \FF _{\mathrm{a}} ^{\bullet,
 \bullet} )=
 \underset{\phi \in M}{\underrightarrow{\lim}}\;
 \mathrm{Hom} _{\mathfrak{A} _{\mathrm{a}} ^{\Pi} }
 (\widetilde{\phi} \E _{\mathrm{a}} ^{\bullet, \bullet}, \FF _{\mathrm{a}}
 ^{\bullet, \bullet} ),
 \\
 \label{4.2.2Beintropre}
 \mathrm{Hom} _{\underleftrightarrow{\lim} ^{\mathrm{ab}} \,
 \mathfrak{A}}(\E ^{\bullet, \bullet}, \FF ^{\bullet, \bullet} )=
 \underset{\phi \in M}{\underrightarrow{\lim}}\;
 \mathrm{Hom} _{\mathfrak{A} ^{\Pi} }
 (\widetilde{\phi} \E ^{\bullet, \bullet}, \FF ^{\bullet, \bullet} ).
\end{gather}
We get from (\ref{4.2.2Beintropre}) that the canonical functor
$\underleftrightarrow{\lim} \, \mathfrak{A} \to
\underleftrightarrow{\lim} ^{\mathrm{ab}} \mathfrak{A}$ is fully
faithful.
This enables us to denote by $\underleftrightarrow{\lim} \colon
\mathfrak{A} ^{\Pi} _{\mathrm{a}}\to
\underleftrightarrow{\lim} \, \mathfrak{A}$
and
$\underleftrightarrow{\lim} \colon\mathfrak{A} ^{\Pi}\to 
\underleftrightarrow{\lim} ^{\mathrm{ab}} \mathfrak{A}$
the canonical functors.

\item Let $N (\mathfrak{A})$ be the full subcategory of 
$\mathfrak{A} ^{\Pi} $ whose objects are null in 
$\underleftrightarrow{\lim} ^{\mathrm{ab}} \, \mathfrak{A}$.
Then, the category $N (\mathfrak{A})$ is a Serre subcategory of
$\mathfrak{A} ^{\Pi} $. Moreover, we have the equality $\mathfrak{A}
^{\Pi} / N (\mathfrak{A}) =\underleftrightarrow{\lim} ^{\mathrm{ab}} \,
\mathfrak{A}$. In particular, $\underleftrightarrow{\lim} ^{\mathrm{ab}}
\,\mathfrak{A}$ is an abelian category.
The proof is identical to \cite[1.2.4]{Caind}.

\item Let $\E \in \mathfrak{A}$. For any $c \in \R$, we pose $\E  ^{c}= \E$ if
$c < 0$ and $\E ^{c} = 0 $ otherwise. For any $(a,b) \in \Pi$, we set
$\rho (\E ) ^{a,b}:= \E ^{a} / \E ^{b}$. We get canonically the object $
\rho (\E ) ^{\bullet ,\bullet}\in\mathfrak{A} ^{\Pi} _{\mr{a}}$.
For simplicity, we put $\rho (\E ) := \rho (\E ) ^{\bullet ,\bullet} $
and we get the fully faithful exact functor $\rho \colon \mathfrak{A}
\to \mathfrak{A} ^{\Pi} _{\mr{a}}$.

\item We get similar properties if we replace $S$ by $S ^{f}$.

\end{enumerate}

\subsection{}
\label{0local}
Let $V \hookrightarrow U\hookrightarrow Y \hookrightarrow X$ be open immersions of realizable varieties. The exact
functor $|_{(V,Y)}$ induces the exact functor
\begin{equation*}
 |_{(V,Y)}\colon 
  \underleftrightarrow{\lim} ^{\mathrm{ab}} \,
  F\text{-}\mathrm{Ovhol}(U,X/K)\to 
  \underleftrightarrow{\lim} ^{\mathrm{ab}}\,
  F\text{-}\mathrm{Ovhol}(V,Y/K).
\end{equation*}
Let $ \E ^{\bullet,\bullet} \in \underleftrightarrow{\lim}
^{\mathrm{ab}}\,F\text{-}\mr{Ovhol} (U,X/K)$. 
We remark that
$ \E ^{\bullet,\bullet} =0$ if and only if $ \E ^{\bullet,\bullet} |_{(U,Y)}
=0$. 
Let $\{U_i\}$ be an open covering of $U$. We notice that  $\E
^{\bullet,\bullet} =0$ if and only if $ \E ^{\bullet,\bullet} | (U _i,U
_i) =0$ for any $i$.

\subsection{}
Set 
$\O _{\mathbb{G}  ^{1} _k}:= \O  _{\widehat{\mb{P}} ^{1} _{\V}} (\hdag\{ 0,\infty\} ) _{\Q}$,
$\D _{\mathbb{G}  ^{1} _k}:= \O  _{\widehat{\mb{P}} ^{1} _{\V}} (\hdag\{ 0,\infty\} ) _{\Q} 
\otimes _{\O  _{\widehat{\mb{P}} ^{1} _{\V}} }
\D  _{\widehat{\mb{P}} ^{1} _{\V}}$
and 
$\D ^{\dag} _{\mathbb{G}  ^{1} _k} := \D ^{\dag} _{\widehat{\mb{P}} ^{1} _{\V}} (\hdag\{ 0,\infty\} ) _{\Q}$
and let $t$ be the coordinate of $\widehat{\mb{P}}^1_{\V}$.
We denote by $\O _{\mathbb{G}  ^{1} _k}  [s, s ^{-1}] \cdot t ^{s} $ the free 
$\O _{\mathbb{G}  ^{1} _k}  [s, s ^{-1}]$-module of rank one generated by $t ^{s}$. 
For any integer $a \in \Z$, the free 
$\O _{\mathbb{G}  ^{1} _k}  [s]$-submodule of rank one generated by $s ^{a}t ^{s}$
is denoted by $ s ^{a} \O _{\mathbb{G}  ^{1} _k}  [s]\cdot t ^{s} $ or by 
$\I ^{a} _{\mathbb{G}  ^{1} _k}$.
Following Beilinson's notation, for integers $a\leq b$, we get a free 
$\O _{\mathbb{G}  ^{1} _k}$-module of finite type by putting
\begin{equation*}
 \I ^{a,b} _{\mathbb{G}  ^{1} _k} 
 :=  
\I ^{a} _{\mathbb{G}  ^{1} _k}
 / \I ^{b} _{\mathbb{G}  ^{1} _k}.
\end{equation*}
We define a structure of $\D _{\mathbb{G}  ^{1} _k}$-module on $\O
_{\mathbb{G}  ^{1} _k}  [s, s ^{-1}] \cdot t ^{s} $ so that for $g\in\O
_{\mathbb{G}  ^{1} _k}$ and $l\in\mb{Z}$, we have
\begin{equation}
\label{dfn-connexion}
 \partial_t(s^lg\cdot t^s)=s^l\partial_t(g)\cdot t^s+
  s^{l+1}g/t\cdot t^s.
\end{equation}
Hence, we get a canonical structure of $\D  _{\mathbb{G}  ^{1} _k}
$-module on  $ \I ^{a,b} _{\mathbb{G}  ^{1} _k} $.
We can check that this $\D _{\mathbb{G}  ^{1} _k}$-module structure
on  $ \I ^{a,b} _{\mathbb{G}  ^{1} _k} $ extends to a $\D ^{\dag
}_{\mathbb{G}  ^{1} _k}$-module structure, with which the module is
coherent, and is associated to an overconvergent isocrystal on
$\mathbb{G}  ^{1} _k$ (see the description of \cite[4.4.5]{Be1}).
Indeed, with the notation of \cite[4.4.5]{Be1},
if $ \I ^{a,b, (m)}$ is the free 
$\widehat{\B} ^{(m)} _{\widehat{\mb{P}} ^{1} _{\V}} (\{ 0,\infty\}) _{\Q}$-module of rank one generated by $t ^{s}$
constructed as above (we replace 
$\O _{\mathbb{G}  ^{1} _k}$  
by 
$\widehat{\B} ^{(m)} _{\widehat{\mb{P}} ^{1} _{\V}} (\{ 0,\infty\}) _{\Q}$),
then we can check that the connection on
$ \I ^{a,b, (m)}$ defined with the same formula as \ref{dfn-connexion} extends to a structure of 
$\widehat{\B} ^{(m)} _{\widehat{\mb{P}} ^{1} _{\V}} (\{ 0,\infty\}) \widehat{\otimes} _{\O _{\widehat{\mb{P}} ^{1} _{\V}}}
\widehat{\D} ^{(m)} _{\widehat{\mb{P}} ^{1} _{\V},\Q}$-module.
Moreover, we have an isomorphism
\begin{equation*}
 \I ^{a,b} _{\mathbb{G}  ^{1} _k}\xrightarrow{\sim}
  F^* \I ^{a,b} _{\mathbb{G}  ^{1} _k}\,;\qquad
  s^lg\cdot t^s\mapsto q^lg\otimes (s^l\cdot t^s)
\end{equation*}
with which $\I ^{a,b} _{\mathbb{G}  ^{1} _k}$ is in $F
\text{-}\mathrm{Isoc} ^{\dag}(\mathbb{G}  ^{1} _k /K)$.
The multiplication by $s^n$ induces the isomorphism in $F
\text{-}\mathrm{Isoc} ^{\dag}(\mathbb{G}  ^{1} _k /K)$:
\begin{equation}
\label{sigma_n}
 \sigma ^n \colon\I ^{a,b}_{\mathbb{G} ^{1} _k}\xrightarrow{\sim}
 \I^{a+n,b+n}_{\mathbb{G}^{1}_k}(-n).
\end{equation}

Moreover, there is a non-degenerate pairing
\begin{equation*}
 \I ^{a,b} _{\mathbb{G}  ^{1} _k}\otimes^\dag_{\mc{O}_{\mb{G}^1}}
  \I ^{-b,-a} _{\mathbb{G}  ^{1} _k}\rightarrow
  \mc{O}_{\mb{G}^1_k}(-1);\qquad
  (x(s),g(s))\mapsto\mr{Res}_{s=0}f(s)\cdot g(-s).
\end{equation*}
We can check easily that this pairing is compatible with Frobenius
structure. By using \cite[Prop 3.12]{Aexpl}, the pairing induces an
isomorphism
\begin{equation}
 \label{isomduallogcomm}
  \DD(\I^{a,b}_{\mb{G}^1_k})\xrightarrow{\sim}
  \I^{-b,-a}_{\mb{G}^1_k}.
\end{equation}
As a variant, we put
$\I ^{a,b} _{\mathbb{G}  ^{1} _{k, \mathrm{log}}} 
:= s ^{a} \O _{\widehat{\A} ^{1} _{\V}}  [s] t ^{s} / s ^{b}\O
_{\widehat{\A} ^{1} _{\V}} [s] t ^{s}$. Then $\I ^{a,b} _{\mathbb{G}
^{1} _{k, \mathrm{log}}} $ is a convergent isocrystal on
the formal log-scheme $(\widehat{\A} ^{1} _{\V}, \{0\})$.

\subsection{}
\label{Key-Lemma}
In the rest of the paper, we will keep the following notation. 
Let $X$ be a realizable variety, $f \in
\Gamma(X,\O_X)$ be a fixed function. Put $Z := f ^{-1} (0)
\overset{i}{\hookrightarrow}X \overset{j}{\hookleftarrow} Y:= X
\setminus Z$. 
Let $f | _Y \colon Y \to \mathbb{G} ^{1} _k$ be the morphism induced by
$f$ and put
\begin{equation*}
 \I ^{a,b} _{f}:= (f | _Y) ^{+} (\I ^{a,b} _{\mathbb{G} ^{1} _k} ) [\dim
  Y -1]\in F \text{-}D^{\mr{b}}_{\mr{ovhol}}(Y/K).
\end{equation*}

For $\E\in\mr{Hol}_F(Y/K)$, put $\E ^{a,b}:= \E \otimes \I ^{a,b}
_{f}[-\dim(Y)]$ (see the notation of $\otimes$ after theorem
\ref{sixfuncforma}). Since the functor $-\otimes \I ^{a,b} _{f}
[-\dim(Y)]$ is exact, we get that $\E^{a,b}\in\mr{Hol}_F(Y/K)$ and then
the object $\E ^{\bullet,\bullet}\in
\underleftrightarrow{\lim}\,\mr{Hol}_F(Y/K)$.

\begin{lem*}
 Let $\E \in\mr{Hol}_F(Y/K)$. The canonical
 morphism of $\underleftrightarrow{\lim} \,\mr{Hol}_F(X/K)$
 \begin{equation*}
   \underleftrightarrow{\lim}\,j _! (\E ^{\bullet, \bullet}) \to
   \underleftrightarrow{\lim}\,j _{+} (\E ^{\bullet, \bullet})
 \end{equation*}
 is an isomorphism.
\end{lem*}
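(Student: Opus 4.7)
The plan is to prove that the cone system $\CC^{\bullet,\bullet} := \mr{cone}\bigl(j_!(\E^{\bullet,\bullet})\to j_+(\E^{\bullet,\bullet})\bigr)$ is the zero object of $\underleftrightarrow{\lim}^{\mr{ab}}\mr{Hol}_F(X/K)$. Each term $\CC^{a,b}$ is supported on $Z$, since $j_!$ and $j_+$ agree over $Y$. By the formula (\ref{4.2.2Beintropre}), vanishing in the limit amounts to exhibiting a single $\phi\in M$ (for instance $\phi(i)=i+n_0$) such that the composite $\widetilde{\phi}\CC^{\bullet,\bullet}\to \CC^{\bullet,\bullet}$ is zero. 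Concretely, this is a uniform nilpotency statement: the transition map $\CC^{a+n_0,b+n_0}\to\CC^{a,b}$ vanishes for every $(a,b)\in\Pi$, with $n_0$ independent of $(a,b)$.

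My approach for producing this uniform $n_0$ replaces Beilinson's $b$-function argument with Kedlaya's semistable reduction theorem. By the local criterion of subsection \ref{0local} I may replace $X$ by members of an affine open cover, then apply semistable reduction to the Frobenius-structured $\E$ to obtain a proper generically \'etale alteration $h\colon X'\to X$ with $Z':=h^{-1}(Z)$ an SNC divisor and $h^+\E|_{Y'}$ extending to a log-overconvergent isocrystal $\widetilde{\E}'$ on $(X',Z')$. Since $h$ is proper, the six-functor formalism of Theorem \ref{sixfuncforma} makes $h_+$ commute with $j_!$, $j_+$, and with tensoring by $\I^{\bullet,\bullet}_f$; using that $\E$ appears as a Frobenius-constituent of $h_+h^+\E$ via the trace map of the generically \'etale $h$, the problem on $X$ reduces to the analogous one on $(X',Z')$.

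On $X'$, the cone $\CC^{a,b}_{X'}$ is computable from the log-module $\widetilde{\E}'\otimes\I^{a,b}_{f',\log}$ introduced in the preceding subsection: the natural map from the log-Deligne extension into its overconvergent extension gives a quasi-isomorphic model for $j_!\to j_+$, and its cone is a complex supported on $Z'$ whose transition structure is controlled by the composition of multiplication by $s$ with the residue of the log-connection of $\widetilde{\E}'$ along the components of $Z'$. Because $s^n$ annihilates $\I^{a,b}_{f,\log}$ as soon as $n\geq b-a$, while the residue acts through a fixed finite-dimensional space whose size is bounded in terms of the rank of $\widetilde{\E}'$ alone (independent of $(a,b)$), the transition map $\CC^{a+n_0,b+n_0}_{X'}\to \CC^{a,b}_{X'}$ vanishes once $n_0$ exceeds the length of the nilpotent filtration on that residue. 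Tracing this back through $h$ yields the required uniform vanishing on $X$.

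The main obstacle is the explicit identification of $\CC^{a,b}_{X'}$ with the residue-controlled complex, together with the uniform estimate on the nilpotency of $s$; this is precisely where Kedlaya's theorem substitutes for the missing $b$-function. The descent from $X'$ back to $X$ via $h_+h^+$, while technical, is handled by standard devissage using Theorem \ref{sixfuncforma} together with Corollary \ref{constiFrobend} to reach the Frobenius-structured case to which Kedlaya's theorem applies.
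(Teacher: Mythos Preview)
Your overall strategy coincides with the paper's: reduce by Kedlaya's semistable reduction to a unipotent isocrystal along an SNC divisor, then control the cone through the nilpotent residue. Two steps, however, do not go through as written. First, Kedlaya's theorem applies to overconvergent $F$-isocrystals, not to arbitrary objects of $F\text{-}\mr{Ovhol}(Y)$; your sentence ``apply semistable reduction to the Frobenius-structured $\E$ \dots\ $h^+\E|_{Y'}$ extending to a log-overconvergent isocrystal'' presupposes that $\E$ is already an isocrystal on $Y$, which it need not be (it may for instance be supported on a proper closed subset). The paper fixes this by a preliminary induction on $\dim X$: after the five-lemma reduction to $F\text{-}\mr{Ovhol}$, one restricts to a dense open $U\subset Y$ on which $\G:=\E|_U$ lies in $F\text{-}\mr{Isoc}^{\dag\dag}(U,X)$, treats the complement inductively so that one may assume $\E=\iota_!(\G)$, and only then applies Kedlaya to $\G$. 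Your appeal to Corollary~\ref{constiFrobend} supplies Frobenius structure but not this isocrystal reduction.

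Second, on the alteration your claim that ``the natural map from the log-Deligne extension into its overconvergent extension gives a quasi-isomorphic model for $j_!\to j_+$'' is not correct: the log extension $\FF^{a,b}$ is $\O$-coherent and is not $j_!(\G^{a,b})$ as a $\D^\dag$-module. The paper instead peels off one branch $Z_1$ of the SNC divisor at a time (an induction on the number $r$ of components), uses the localization triangle for $\R\Gammadag_{Z_1}$, and invokes \cite[3.4.12]{AC} to identify $\mathcal{H}^{\dag i}_{Z_1}\bigl(j_!\iota_!(\G^{a,b})\bigr)$ with $i_{1+}$ applied to $\ker$ and $\coker$ of the residue $N_{1,\FF^{a,b}}=N_{1,\FF}\otimes 1 + 1\otimes s$. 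From that point your conclusion is right, but the justification you give (``$s^n$ annihilates $\I^{a,b}$ once $n\geq b-a$'') is a red herring since $b-a$ is unbounded; the operative fact is rather that multiplication by $s$ becomes, via the shift $\sigma^1$ of~\eqref{sigma_n}, an \emph{isomorphism} in $\underleftrightarrow{\lim}$, so that $N_{1,\FF^{a,b}}$ is a nilpotent perturbation of an invertible operator and hence itself invertible there.
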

\begin{proof}
 We put $d:=\dim(Y)$. Using the five lemma, we may assume that $\E\in
 F\text{-}\mr{Ovhol}(Y/K)$. The proof is divided into several steps.
 \medskip

 \noindent
 {\bf 0)} By \ref{0local}, it is sufficient to check that the canonical
 homomorphism is an isomorphism over $(Y,X)$.
 By abuse of notation in this proof, we still denote by $\E: = \E |
 (Y,X)$ and write $j \colon (Y, X) \to (X, X)$ instead of $(j,
 \mathrm{id})$.
 \medskip

 \noindent
 {\bf 1)} We prove the lemma under the following hypotheses:
 ``Let $\X$ be a smooth formal $\V$-scheme with local coordinates
 denoted by $t _1,\dots , t _d$ whose special fiber is $X$. For any $i=
 1,\dots ,d$, we put $\ZZ _i = V (t _i)$. We suppose that there exist
 an open immersion $U \hookrightarrow Y$ such that $T:= X \setminus U$
 is a strict normal crossing divisor of $X$ and
 an overconvergent $F$-isocrystal $\G$ on $(U,X)/K$ unipotent along $T$ so
 that $\E = \iota _{!} (\G)$, where $\iota \colon(U, X) \to (Y,X)$ is
 the induced morphism of couples
  (see below in the proof for a concrete description of the notion of unipotence).
 We fix $0\leq r '\leq r \leq d$.
 We suppose that the special fiber of $\mc{T}
 := \cup _{1 \leq n \leq r}\ZZ _n$ (resp. 
 $\ZZ :=  \cup _{1 \leq n' \leq r'}\ZZ _{n'}$)
 is $T$ (resp. $Z$).''

 We check the step 1) by induction on the integer $r$ (in the induction, the scheme $X$
 can vary and so is $f$, $Y$, $\E$, $\X$ etc.).
 Where $r =0$, this is obvious. Suppose $r \geq 1$.
 We denote by $\mc{D}:= \cup _{2 \leq n \leq r}\ZZ _n$.
 Consider the following commutative diagram of $F
 \text{-}\mathrm{Ovhol}(X,X/K) ^{\Pi}$
 \begin{equation}
  \notag
   \xymatrix @R=0,3cm{
   {0} 
   \ar[r] ^-{}
   & 
   {\mathcal{H} ^{\dag 0} _{Z _1} j _! (\E ^{\bullet, \bullet})} 
   \ar[r] ^-{}
   \ar[d] ^-{}
   & 
   {j _! (\E ^{\bullet, \bullet})}
   \ar[r] ^-{}
   \ar[d] ^-{}
   & 
   {(\hdag Z _1) j _! (\E ^{\bullet, \bullet})} 
   \ar[r] ^-{}
   \ar[d] ^-{}
   &
   {\mathcal{H} ^{\dag 1} _{Z _1}  j _! (\E ^{\bullet, \bullet})}
   \ar[d] ^-{}
   \\ 
  {0} 
   \ar[r] ^-{}
   & 
   {\mathcal{H} ^{\dag 0} _{Z _1} j _+ (\E ^{\bullet, \bullet})} 
   \ar[r] ^-{}
   & 
   {j _+ (\E ^{\bullet, \bullet})}
   \ar[r] ^-{}
   & 
   {(\hdag Z _1)  j _+ (\E ^{\bullet, \bullet})} 
   \ar[r] ^-{}
   & 
   {\mathcal{H} ^{\dag 1} _{Z _1} j _+ (\E ^{\bullet, \bullet})} 
   }
 \end{equation}
 whose horizontal sequences are exact. 
 By using the induction hypothesis and \ref{0local}, the homomorphism
 $\underleftrightarrow{\lim} \, (\hdag Z _1)  j _! (\E ^{\bullet,
 \bullet})\to\underleftrightarrow{\lim} \, (\hdag Z _1)  j _+ (\E
 ^{\bullet, \bullet})$ is an isomorphism.
 Since $\R \underline{\Gamma} ^{\dag} _{Z _1} j _+ (\E ^{\bullet, \bullet})=0$,
 then it is sufficient to check that $\underleftrightarrow{\lim}
 \,\mathcal{H} ^{\dag i} _{Z _1} j _! (\E ^{\bullet, \bullet})=0$, for
 any $i=0,1$ by the exactness of $\underleftrightarrow{\lim}$.

 We have a strict normal crossing divisor of $\ZZ _1$ defined by
 $\mc{D} _1 := \bigcup _{i=2} ^{r} \ZZ _1 \cap \ZZ _i$. 
 We put $\U := \X \setminus \mc{T}$, and let 
 $i_1\colon \ZZ_1\hookrightarrow\X$ be the canonical closed immersion.
 Since $\G$ is unipotent, following \cite{kedlaya-semistableI},
 this is equivalent to saying that there exists a convergent isocrystal
 $\FF$ on the log scheme $(\X, M _\mc{T})$, where $M _\mc{T}$ means the
 log structure induced by $\mc{T}$ (we keep the same kind of notation
 below), so that $\G \riso (\hdag T  ) (\FF)$. 
 By abuse of notation, we denote by $f = u t _{1}^{a_1} \cdots  t
 _{r'}^{a_{r'}}\in \O _{\X}$ (with $a_i\in\mb{N}$ and $u \in \O ^{*}
 _{\X}$), a lifting of $f$. We put
 \begin{equation*}
 \I ^{a,b} _{f, \mathrm{log}}:= (f ^{\sharp}) ^{*} (\I ^{a,b}
  _{\mathbb{G} ^{1} _{k,\mathrm{log}}} ),
 \end{equation*}
 where $f ^{\sharp}$ is the composition morphism of formal log-schemes
 $f ^{\sharp}\colon(\X, M _\mc{T})  \to (\X, M _\ZZ) \to ( \widehat{\A}
 ^{1} _{\V} , M _{\{ 0\}})$ where the last morphism is induced by
 $f$. We put $\FF  ^{a,b}:= \FF  \otimes ^\dag _{\O _{\X}}\I ^{a,b} _{f
 ,\mathrm{log}}$, which is a convergent isocrystal $\FF$ on the formal
 log scheme $(\X, M _\mc{T})$ with nilpotent residues.
 We put $\U _1 := \ZZ _1 \setminus \mc{D} _1$, and let
 $\iota _1:=(\star,\mathrm{id},\mathrm{id})\colon (U_1,Z_1,\ZZ_1)
 \to(Z_1, Z _1,\ZZ_1)$ be the canonical morphism of frames. Let
 $N_{1,\FF^{a,b}}$ be the action induced by $t _{1} \partial _{1}$ on $i
 _{1} ^{*} ( \FF  ^{a,b})$. We put
 \begin{equation*}
 \J ^{a,b}:= \iota  ^{ +}\bigl(\I ^{a,b} _{f} | _{(Y, X)}\bigr)=
  s ^{a}\, \O_{\X} (\hdag T) _{\Q} [s]\cdot f ^{s} /
  s ^{b}\, \O _{\X} (\hdag T) _{\Q} [s]\cdot f ^{s}.
 \end{equation*}
We have
 \begin{equation*}
 \E ^{a,b}[d]:= \iota  _{!} (\G) \otimes\bigl(\I ^{a,b} _{f} | _{(Y,
  X)}\bigr)\riso\iota _{!}\bigl(\G \otimes \J ^{a,b}\bigr).
 \end{equation*}
 We put $\G ^{a,b}:= \G \otimes \J ^{a,b}[-d]\in F \text{-}\mathrm{Isoc}
 ^{\dag \dag} (U,X/K)$. Since $(\hdag T) ( \I ^{a,b} _{f, \mathrm{log}})
 \riso\J ^{a,b}$, we have $(\hdag T) ( \FF ^{a,b} )\riso\G ^{a,b}$.
 By \cite[3.4.12]{AC}, we get the isomorphisms
 \begin{equation*}
  \mathcal{H} ^{\dag 1} _{Z _1} (j _!  \iota _{!} (\G ^{a,b}))
   \riso 
    i _{1+} \circ \iota _{1!} \circ   (\hdag D _1)
   \left(\coker N_{1,\FF^{a,b}}\right),
   \quad
   \mathcal{H} ^{\dag 0} _{Z _1} (j _! \iota _! (\G ^{a,b}))\riso 
   i _{1+} \circ  \iota _{1!} \circ (\hdag D _1)
   \left(\ker N_{1,\FF^{a,b}}\right).
 \end{equation*}
 Then, by functoriality, it is sufficient to check that
 $\underleftrightarrow{\lim} \, N_{1,\FF^{a,b}}$
 is an isomorphism.
Since
 $N_{1,\FF^{a,b}}= N_{1,\FF} \otimes \mathrm{id}+ \mathrm{id} \otimes
 N_{1,\I ^{a,b} _{f , \mathrm{log}}}$, and since there exists an integer $n$
 (independent of $a$, $b$) such that $N_{1,\FF}^n=0$, then we reduce to
 checking that 
 $\underleftrightarrow{\lim} \, N_{1,\I ^{a,b} _{f ,\mathrm{log}}}$ 
 is an isomorphism, which is obvious since $N_{1,\I ^{a,b} _{f ,
 \mathrm{log}}}$ is the multiplication by $s$.
 \medskip

 \noindent
 {\bf 2)} Finally, let us reduce the lemma to 1). We proceed by
 induction on $\dim X$. We can suppose that $j$ is dominant. Recalling
 that $Y$ being reduced, there exists a dominant open immersion $U\to Y$
 such that $U$ is smooth and $\G := \iota ^{+} (\E) \in 
 F \text{-}\mathrm{Isoc} ^{\dag \dag} (U,X/K)$, where $\iota \colon (U, X)
 \to (Y,X)$. By the induction hypothesis, we can suppose that $\E =
 \iota _{!} (\G)$. Put $T := X \setminus U$. Then, we can suppose that
 $U, Y, X$ are integral and that $\iota$ is affine. 
 Let  $\alpha \colon \widetilde{X}  \to X$ be a proper surjective
 generically finite and \'{e}tale morphism, such that $\widetilde{X} $
 is smooth and quasi-projective,
 $\widetilde{T} := \alpha ^{-1} (T)$ is a strict normal crossing divisor
 of $\widetilde{X} $. We put $\alpha \colon (\widetilde{X},
 \widetilde{X})  \to (X,X)$ (by abuse of notation), $\widetilde{Y} :=
 \alpha ^{-1} (Y) $, $\widetilde{U} := \alpha ^{-1} (U)$, $\beta \colon
 (\widetilde{Y}, \widetilde{X})  \to (Y,X)$,
 $\gamma  \colon (\widetilde{U}, \widetilde{X})  \to (U,X)$,
 $\widetilde{\iota}\colon (\widetilde{U}, \widetilde{X})
 \hookrightarrow (\widetilde{Y}, \widetilde{X})$,
 $\widetilde{j}\colon  (\widetilde{Y}, \widetilde{X})  \hookrightarrow
 (\widetilde{X}, \widetilde{X})$,
 $\widetilde{\G} := \gamma ^{!} (\G)$, 
 $\widetilde{\E}:= \widetilde{\iota} _{!} (\widetilde{\G} )$.
 By Kedlaya's
 semi-stable theorem \cite{Ke}, there exists such a morphism $\alpha$
 satisfying moreover the following property: the object
 $\widetilde{\G}\in F \text{-}\mathrm{Isoc} ^{\dag \dag}
 (\widetilde{U},\widetilde{X}/K)$ is unipotent.
 We know that $\G$ is a direct factor of $\mathscr{H} ^{0} \gamma _{+}
 (\widetilde{\G})$ (see the proof of \cite[6.1.4]{caro_devissge_surcoh}
 at the beginning of p.433).
 Then $\E = \iota _{!} (\G)$ is a direct factor of
 $\iota _{!} \mathscr{H} ^{0} \gamma _{!} (\widetilde{\G})\riso 
 \mathscr{H} ^{0} \beta _{!} \circ \widetilde{\iota} _{!}
 (\widetilde{\G})$.
 Thus we are reduced to checking the lemma for $ \mathscr{H} ^{0} \beta
 _{!} \circ \widetilde{\iota} _{!} (\widetilde{\G})$. We have
 \begin{equation}
  \label{beta0wide1}
   \tag{$\star$}
   \mathscr{H} ^{-d} \beta _{!} \circ \widetilde{\iota} _{!}
   (\widetilde{\G})\otimes (\I ^{a,b} _{f} | _{(Y, X)})
   \riso
   \mathscr{H} ^{-d} \beta _{!}\bigl(\widetilde{\iota} _{!}
   (\widetilde{\G})\otimes \beta ^{+}  (\I ^{a,b} _{f} | _{(Y,
   X)})\bigr)=
   \mathscr{H} ^{-d} \beta _{!} ( \widetilde{\E} \otimes \I ^{a,b}
   _{\widetilde{f}} | _{(\widetilde{Y}, \widetilde{X})})
 \end{equation}
 where $\widetilde{f}= f \circ \alpha$ (the equality comes from
 $\widetilde{\iota} _{!} (\widetilde{\G} )= \widetilde{\E}$ and $\beta
 ^{+}  (\I ^{a,b} _{f} | _{(Y, X)})=  \I ^{a,b} _{\widetilde{f}} |
 _{(\widetilde{Y}, \widetilde{X})}$).
 By applying the exact functor $j _{!}$ (resp.\ $j _+$) to the
 composition isomorphism of (\ref{beta0wide1}), we get the first
 isomorphisms of the following ones:
 \begin{gather*}
  j _{!} (\mathscr{H} ^{-d} \beta _{!} \circ \widetilde{\iota} _{!}
  (\widetilde{\G})\otimes (\I ^{a,b} _{f} | _{(Y, X)}) )
  \riso
  j _{!} \circ \mathscr{H} ^{-d} \beta _{!} ( \widetilde{\E} \otimes \I
  ^{a,b} _{\widetilde{f}} | _{(\widetilde{Y}, \widetilde{X})})
  \riso
 \mathscr{H} ^{-d} \alpha _{!}   \circ \widetilde{j} _{!}(
  \widetilde{\E}\otimes \I ^{a,b} _{\widetilde{f}} | _{(\widetilde{Y},
  \widetilde{X})})
  \\
  j _{+} (\mathscr{H} ^{-d} \beta _{!} \circ \widetilde{\iota} _{!}
  (\widetilde{\G})\otimes (\I ^{a,b} _{f} | _{(Y, X)}) )
  \riso
  j _{+} \circ \mathscr{H} ^{-d} \beta _{!} ( \widetilde{\E} \otimes \I
  ^{a,b} _{\widetilde{f}} | _{(\widetilde{Y}, \widetilde{X})})
  \riso
  \mathscr{H} ^{-d} \alpha _{!}   \circ \widetilde{j} _{+}(
  \widetilde{\E} \otimes \I ^{a,b} _{\widetilde{f}} | _{(\widetilde{Y},
  \widetilde{X})}).
 \end{gather*}
 From 1), the canonical morphism
 $\underleftrightarrow{\lim} \, \widetilde{j} _{!}( \widetilde{\E}
 \otimes \I ^{a,b} _{\widetilde{f}} | _{(\widetilde{Y},
 \widetilde{X})}[-d])\to
 \underleftrightarrow{\lim} \,  \widetilde{j} _{+}( \widetilde{\E}
 \otimes \I ^{a,b} _{\widetilde{f}} | _{(\widetilde{Y},
 \widetilde{X})}[-d])$ is an isomorphism.
 Then so is
 $\underleftrightarrow{\lim} \, \mathscr{H} ^{-d} \alpha _{!}
 \circ \widetilde{j} _{!}( \widetilde{\E}
 \otimes \I ^{a,b} _{\widetilde{f}} | _{(\widetilde{Y},
 \widetilde{X})})
 \riso
 \underleftrightarrow{\lim} \,  \mathscr{H} ^{-d} \alpha _{!}   \circ
 \widetilde{j} _{+}( \widetilde{\E} \otimes \I ^{a,b} _{\widetilde{f}} |
 _{(\widetilde{Y}, \widetilde{X})})$.
\end{proof}

\subsection{}
Let $\E \in\mr{Hol} _F(Y/K)$. With the notation of \ref{Key-Lemma}, we put
$\E _k ^{a,b} := \E ^{\max\{a,k\}, \max\{b,k\}}$ for any integer $k\in
\Z$.
We get $ \E ^{\bullet,\bullet} _{k} \in
\underleftrightarrow{\lim}\,\mr{Hol} _F(Y/K)$.
Now, for $\E\in\mr{Hol} _F(Y/K)$, we put
\begin{equation*}
 \Pi^{a,b}_{!+}( \E) :=
  \underleftrightarrow{\lim}\, j _+ (\E _a ^{\bullet,\bullet}) /
  \underleftrightarrow{\lim}\, j _! (\E _b ^{\bullet,\bullet})
\end{equation*}
in $\underleftrightarrow{\lim}\,\mr{Hol} _F (X/K)$. By Lemma
\ref{Key-Lemma}, this is in fact\footnote{
Since this deduction is formal and not explained in \cite{Be}, further
explanations might be needless for experts, but we point out that the
details are written down in Lichtenstein's thesis \cite[Prop 3.21]{Li}.
However, there is a small mistake in Lichtenstein's argument, as well as
some obvious typos: he claims that there exists an isomorphism
$\widetilde{\varphi}\mc{F}_!^{a,b}
\xrightarrow{\sim}\widetilde{\varphi}\mc{F}_*^{a,b}$ for some
$\varphi\geq\mathbf{1}_{\mb{Z}}$ using the notation in {\it ibid.}, but
this is wrong in general.
This issue can be resolved as follows: We may take $\varphi$ such
that $\widetilde{\varphi}\mc{F}_*^{a,b}\rightarrow
\widetilde{\varphi}\mc{F}_!^{a,b}$ which induces the inverse of $\alpha$
if we pass to the pro-ind category. Now, we consider the last big
diagram in the proof of {\it ibid.}. Because of the mistake, we do not
have the isomorphism $\#$, but we do have a homomorphism
$\#'\colon\underleftrightarrow{\lim}\mc{F}^{a,b}_{*,\varphi\ell}
\rightarrow\underleftrightarrow{\lim}\mc{F}^{a,b}_{!,\varphi\ell}$
making the diagram commutative. Other homomorphisms or isomorphisms
remain to be the same: since the isomorphism $\#$ is used only to show
the existence of the isomorphism
$\mr{coker}_{k,\ell}=\mr{coker}(\star)$, the existence of $\#'$ is
enough to show the equality.
}
in $\mr{Hol} _F(X/K)$, which yields a functor
$\Pi^{a,b}_{!+}\colon\mr{Hol} _F (Y/K)\to\mr{Hol} _F (X/K)$. The
following properties can be checked easily:
\begin{enumerate}
 \item By (\ref{isomduallogcomm}), we have
       $\mb{D}\circ\Pi^{a,b}_{!+}\cong(\Pi^{-b,-a}_{!+}
       \circ\mb{D})(1)$.

 \item The isomorphism $\sigma ^n$ of \ref{sigma_n} induces an isomorphism
       $\Pi^{a,b}_{!+}\xrightarrow{\sim}\Pi^{a+n,b+n}_{!+}(-n)$.
\end{enumerate}
We put $\Psi _f ^{(i)}:=\Pi^{i,i}_{!+}$,
$\Xi^{(i)}_f:=\Pi^{i,i+1}_{!+}$, and put $\Psi _f :=\Psi _f ^{(0)}$,
$\Xi _f:=\Xi _f ^{(0)}$. The isomorphisms
\begin{equation*}
 \underleftrightarrow{\lim}\,j _! (\E_i^{\bullet,\bullet})/
  \underleftrightarrow{\lim}\, j _! (\E _{i+1}^{\bullet,\bullet})
  \cong j_!(\E)(i),\qquad
  \underleftrightarrow{\lim}\,j _+ (\E_i^{\bullet,\bullet})/
  \underleftrightarrow{\lim}\, j _+ (\E _{i+1}^{\bullet,\bullet})
  \cong j_+(\E)(i)
\end{equation*}
induce exact sequences
\begin{equation*}
 0\rightarrow j_!(\E)(i)\xrightarrow{\alpha_-}
  \Xi^{(i)}_f(\E)\xrightarrow{\beta_-}
  \Psi^{(i)}_f(\E)\rightarrow0,\quad
 0\rightarrow \Psi_f^{(i+1)}(\E)\xrightarrow{\beta_+}
  \Xi^{(i)}_f(\E)\xrightarrow{\alpha_+}
  j_+(\E)(i)\rightarrow0.
\end{equation*}
We define a functor $\Phi_f\colon\mr{Hol} _F(X/K)\rightarrow\mr{Hol}
_F(Z/K)$ as follows. Let $\E\in \mr{Hol} _F (X/K)$, and put
$\E_Y:=j^+(\E)$. Let $\gamma_-\colon j_!(\E_Y)\rightarrow\E$ and
$\gamma_+\colon\E\rightarrow j_+(\E_Y)$ be the adjunction
homomorphisms. Consider the sequence
\begin{equation*}
 j_!\E_Y\xrightarrow{(\alpha_-,\gamma_-)}
  \Xi_f(\E_Y)\oplus\E
  \xrightarrow{(\alpha_+,-\gamma_+)}
  j_+(\E_Y).
\end{equation*}
The cohomology of this sequence is $\Phi_f(\E)$.

\begin{rem}
In fact we have checked in the Key Lemma \ref{Key-Lemma}
that the canonical morphism 
$ \alpha ^{\bullet,\bullet} \colon j _! (\E ^{\bullet, \bullet}) \to
j _{+} (\E ^{\bullet, \bullet})$
of $\mr{Hol}_F(X/K) ^{\Pi} _{\mr{a}}$
becomes an isomorphism in 
$(S ^f _{\mr{a}} )^{-1}\mr{Hol}_F(X/K) ^{\Pi} _{\mr{a}}$.
We remark that this is equivalent to saying that 
there exist an integer $N $ large enough
and a morphism 
$\beta ^{\bullet,\bullet} \colon 
j _{+} (\E ^{\bullet +N, \bullet +N}))\to 
j _{!} (\E ^{\bullet, \bullet})$ of 
$\mr{Hol}_F(X/K) ^{\Pi} _{\mr{a}}$ so that the morphisms 
$\alpha ^{\bullet,\bullet}\circ \beta ^{\bullet, \bullet}$ and 
$\beta ^{\bullet, \bullet} \circ \alpha ^{\bullet +N, \bullet +N} $ 
are the
canonical morphisms. 
Since the multiplication by $s ^N$ factors through
$j _{+} (\E ^{\bullet, \bullet }) (N) \riso 
j _{+} (\E ^{\bullet +N, \bullet +N}) 
\to 
j _{+} (\E ^{\bullet, \bullet})$, 
we get that $\coker  \alpha ^{a,b}$
and 
$\ker  \alpha ^{a,b}$
are killed by $s ^{N}$.
For any integer $i\geq 0$, this implies that 
the projective system $\coker  (s ^{i}\alpha ^{a,b} (i))$ stabilizes for
 $b$ large enough (with $a$ and $i$ fixed).
We remark that this limit is isomorphic 
to 
$\Pi^{a,a +i} _{!+}$,
which is the analogue of the remark by Beilinson and Bernstein in
\cite[4.2]{Beil-Bern-Jantzen}.
\end{rem}

\begin{prop}
 \label{mainlemthmcons}
 The functors $\Pi^{a,b}_{!+}$ and $\Phi_f$ are exact. When $\E$ is in
 $\mr{Hol} _F(Z/K)$, then $\E\cong\Phi_f(\E)$ canonically.
\end{prop}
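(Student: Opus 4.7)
The plan is to reduce both exactness statements to the exactness of the basic building blocks $(-)\otimes\I^{a,b}_f$, $j_!$, $j_+$ and $\underleftrightarrow{\lim}$, combined with the Key Lemma~\ref{Key-Lemma}.

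For the exactness of $\Pi^{a,b}_{!+}$, I will first check that the canonical morphism $\underleftrightarrow{\lim}\,j_!(\E_b^{\bullet,\bullet})\to\underleftrightarrow{\lim}\,j_+(\E_a^{\bullet,\bullet})$ is a monomorphism, so that $\Pi^{a,b}_{!+}(\E)$ is the cokernel of an injection. The natural transformations $\E_b^{\bullet,\bullet}\hookrightarrow\E_a^{\bullet,\bullet}$ (for $a\leq b$) and $\E_b^{\bullet,\bullet}\hookrightarrow\E^{\bullet,\bullet}$ are levelwise monomorphisms: at $(a',b')\in\Pi$ each component either has vanishing source (when the upper and lower max-indices collapse, forcing the relevant $\I^{\cdot,\cdot}=0$) or is the natural inclusion of $\O$-submodules induced by an inclusion of ideals $s^c\O\hookrightarrow s^{c'}\O$ modulo a common denominator. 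The Key Lemma applied to $\E$ gives $\underleftrightarrow{\lim}\,j_!(\E^{\bullet,\bullet})\xrightarrow{\sim}\underleftrightarrow{\lim}\,j_+(\E^{\bullet,\bullet})$; in the functoriality square along $\E_b^{\bullet,\bullet}\hookrightarrow\E^{\bullet,\bullet}$, the horizontal arrows are monomorphisms (by exactness of $j_!$, $j_+$, $\underleftrightarrow{\lim}$) and the right vertical is this isomorphism, which forces the left vertical $\underleftrightarrow{\lim}\,j_!(\E_b^{\bullet,\bullet})\to\underleftrightarrow{\lim}\,j_+(\E_b^{\bullet,\bullet})$ to be a monomorphism. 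Post-composing with the monomorphism $\underleftrightarrow{\lim}\,j_+(\E_b^{\bullet,\bullet})\hookrightarrow\underleftrightarrow{\lim}\,j_+(\E_a^{\bullet,\bullet})$ (from $\E_b^{\bullet,\bullet}\hookrightarrow\E_a^{\bullet,\bullet}$) then yields the desired injectivity. Given a short exact sequence $0\to\E'\to\E\to\E''\to0$ in $\mr{Hol}_F(Y/K)$, the resulting $2\times3$ ladder obtained by applying the exact functors $(-)\otimes\I^{a,b}_f$, $j_!$, $j_+$, $\underleftrightarrow{\lim}$ has exact rows and injective vertical arrows, so the snake lemma delivers the short exact sequence $0\to\Pi^{a,b}_{!+}(\E')\to\Pi^{a,b}_{!+}(\E)\to\Pi^{a,b}_{!+}(\E'')\to0$.

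For $\Phi_f$, I will interpret it as $H^0$ of the two-term complex
\begin{equation*}
C(\E)\colon\;j_!\E_Y\xrightarrow{(\alpha_-,\gamma_-)}\Xi_f(\E_Y)\oplus\E\xrightarrow{(\alpha_+,-\gamma_+)}j_+\E_Y.
\end{equation*}
Since $\alpha_-$ is injective (the left map of the first short exact sequence preceding the proposition), the first differential of $C(\E)$ is injective, so $H^{-1}(C(\E))=0$; dually, $\alpha_+$ is surjective (the right map of the second short exact sequence), so the second differential is surjective and $H^1(C(\E))=0$. Because $j^+=(-)_Y$, $j_!$, $j_+$ and $\Xi_f=\Pi^{0,1}_{!+}$ are all exact (the last by the previous paragraph), a short exact sequence in $\mr{Hol}_F(X/K)$ induces a short exact sequence of complexes $C(\E)$, whose long exact cohomology sequence collapses to $0\to\Phi_f(\E')\to\Phi_f(\E)\to\Phi_f(\E'')\to0$.

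Finally, if $\E\in\mr{Hol}_F(Z/K)$ then $\E_Y=j^+\E=0$, so every outer term of $C(\E)$ vanishes and $C(\E)$ reduces to the summand $\E$ in middle degree, yielding $\Phi_f(\E)=H^0(C(\E))\cong\E$ canonically. The main obstacle is the injectivity step in the first paragraph, which requires carefully combining the Key Lemma with a small diagram chase; once that is granted, both exactness statements are formal consequences of the snake lemma and of the vanishing of the outer cohomologies of $C(\E)$.
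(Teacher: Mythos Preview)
Your proof is correct and follows essentially the same approach as the paper's: exactness of $\Pi^{a,b}_{!+}$ from that of $j_!$, $j_+$ (and $\underleftrightarrow{\lim}$), exactness of $\Phi_f$ from the injectivity of $\alpha_-$ and surjectivity of $\alpha_+$, and the last claim immediately from $\E_Y=0$. Your careful verification that $\underleftrightarrow{\lim}\,j_!(\E_b^{\bullet,\bullet})\to\underleftrightarrow{\lim}\,j_+(\E_a^{\bullet,\bullet})$ is a monomorphism, via the Key Lemma and the componentwise injections $\E_b^{\bullet,\bullet}\hookrightarrow\E^{\bullet,\bullet}$ and $\E_b^{\bullet,\bullet}\hookrightarrow\E_a^{\bullet,\bullet}$, makes explicit the snake-lemma step that the paper leaves implicit in its one-line ``follows by that of $j_!$ and $j_+$''.
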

\begin{proof}
 The exactness of $\Pi^{a,b}_{!+}$ follows by that of $j_!$ and
 $j_+$. The exactness of $\Phi_f$ follows since $\alpha_-$ is injective
 and $\alpha_+$ is surjective. The last claim follows by definition.
\end{proof}

\begin{rem*}
 Since we do not use in the proof of the main theorem, we do not go into
 the details, but it is straightforward to get an analogue of
 \cite[Prop 3.1]{Beglue}, a gluing theorem of holonomic modules.
\end{rem*}

\noindent
Tomoyuki Abe:\\
Kavli Institute for the Physics and Mathematics of the Universe (WPI)\\
The University of Tokyo\\
5-1-5 Kashiwanoha, Kashiwa, Chiba, 277-8583, Japan \\
e-mail: {\tt tomoyuki.abe@ipmu.jp}

\bigskip\noindent
Daniel Caro:\\
Laboratoire de Math\'{e}matiques Nicolas Oresme (LMNO)\\
Universit\'e de Caen, Campus 2\\
14032 Caen Cedex, France\\
e-mail: {\tt  daniel.caro@unicaen.fr}

\end{document}